\theoremstyle{plain}
\newtheorem{theorem}{Theorem}[section]
\newtheorem{lemma}[theorem]{Lemma}
\theoremstyle{definition}
\newtheorem{definition}[theorem]{Definition}
\theoremstyle{remark}
\newtheorem{remark}[theorem]{Remark}
\numberwithin{equation}{section}
\numberwithin{theorem}{section}
\newcommand{\upbar}[1]{\,\overline{\! #1}}
\renewcommand{\epsilon}{\varepsilon}
\renewcommand{\tilde}{\widetilde}
\renewcommand{\div}{\mathop{\rm div}\nolimits}
\definecolor{light}{gray}{.9}
\title[Invariant measures on metric graphs]{A combinatorial representation for the
invariant measure of diffusion processes on metric graphs}
\author{Michele Aleandri}
\address{\small{Universit{\`a} LUISS Guido Carli \\ Viale Romania, 32 \\ 00197 Roma, Italia. }}
\email{maleandri@luiss.it}
\author{Matteo Colangeli}
\address{\small{Universit{\`a} dell'Aquila\\ Via Vetoio, Loc. Coppito\\ 67010 L'Aquila, Italia.}}
\email{matteo.colangeli1@univaq.it}
\author{Davide Gabrielli}
\address{\small{Universit{\`a} dell'Aquila\\ Via Vetoio, Loc. Coppito\\ 67010 L'Aquila, Italia.}}
\email{davide.gabrielli@univaq.it}
\begin{document}

\begin{abstract}

We give a generalization to a continuous setting of the classic Markov chain tree Theorem. In particular, we consider an irreducible  diffusion process on a metric graph. The unique invariant measure has an atomic component on the vertices and an absolutely continuous part on the edges. We show that the corresponding density at $x$ can be represented by a normalized superposition of the weights associated to metric  arborescences oriented toward the point $x$. The weight of each oriented metric arborescence is obtained by the exponential of integrals of the form $\int\frac{b}{\sigma^2}$ along the oriented edges time a weight for each node determined by the local orientation of the arborescence around the node time the inverse of the diffusion coefficient at $x$. The metric arborescences are obtained cutting the original metric graph along some edges.
\bigskip

\noindent {\em Keywords}: Diffusion processes, metric graphs, stationarity.

\noindent{\em AMS 2010 Subject Classification}:
60G10, 60J60, 60C05 
\end{abstract}

\maketitle
\thispagestyle{empty}

\section{Introduction}
A powerful construction for finite state Markov chain is the so-called \emph{Markov Chain Matrix Tree Theorem} \cite{AT, FWb, PW}. In the case of an irreducible continuous time finite state Markov chain the unique invariant measure is obtained as a normalized superposition of weights associated to some combinatorial structures. The combinatorial structures considered are the rooted arborescences of the transition graph. The transition graph of the chain is a directed graph with vertices corresponding to the states and directed edges corresponding to the possible transitions. For an irreducible chain the graph is strongly connected. An arborescence is a spanning directed subgraph such that, disregarding the orientation, we have a spanning tree, and moreover all the edges are directed towards a single vertex $x$, called the root. The weight of each arboresence is the product of the rates of all the edges that it contains.
The invariant measure at $x$ coincides with the normalized sum of the weights of all arborescences rooted at $x$.

\smallskip
A classic result in Probability Theory is the diffusive rescaling of a class of random walks with convergence to diffusion processes. The prototype of this class of results is the celebrated \emph{Donsker Theorem}. A diffusive random walk is obtained, generically, by weakly perturbing  a reversible random walk. The most general reversible random walk on a graph is determined by some positive weights associated to vertices and symmetric positive weights associated to the edges. In the scaling limit, we consider a grid
of mesh $1/N$ embedded into $\mathbb R^d$ and consider the weights on the vertices and edges as discretized versions of smooth positive functions that
we call $\alpha$ and $Q$ respectively. Likewise, the weak perturbation is obtained by the discretization of a smooth vector field $F$. The family of diffusive walks is therefore parameterized by the triple $(\alpha, Q, F)$. Correspondingly the family  of limiting processes, that are the diffusion processes, are parameterized by the pair $(b.\sigma)$ (see equation \eqref{diff}) where the vector field $b$ is called the \emph{drift} and the matrix $\sigma$ is called the \emph{diffusion coefficient}. The correspondence between $(\alpha, Q, F)$ and $(b,\sigma)$ is not
one-to-one and a whole class of microscopic models converge to the same diffusion process.

\smallskip

A very natural question is whether the combinatorial representation of the invariant measure for the random walks has a corresponding continuous version for the diffusion processes. We answer positively to this question in the case of diffusions on \emph{metric graphs}. A metric graph is a metric space that is obtained by gluing the extrema of a finite number of bounded segments to some vertices. A diffusion process on a metric graph is a process that evolves like a diffusion along each edge and then, when reaching a vertex, it evolves by possibly spending some random time therein and by then picking up at random a new edge on which the evolution continues.

\smallskip
We prove in this paper that the invariant measure of a diffusion process on a metric graph has a representation that is the continuous counterpart of the combinatorial representation of the matrix tree Theorem. More precisely we have that the density of the invariant measure on a point $x$ belonging to an edge is obtained as follows. Given the metric graph we can obtain a metric tree just cutting some  edges. Each edge can be cut in  different ways on a point parameterized by a real parameter. From the metric tree we can obtain a metric arborescence rooted at $x$ simply orienting all the edges towards $x$. To each metric arborescence we associate a weight. The weight is obtained as the product of several terms, one for each edge and one for each vertex. The weight on an edge is given by the exponential of $\int s_{\mathbf e}$ where the integral is an integral along the edge $\mathbf e$ according to its orientation on the arborescence and $s_\mathbf e:=\frac{b_\mathbf e}{\sigma^2_\mathbf e}$, where
$b_\mathbf e$ and $\sigma^2_\mathbf e$ are the drift and the diffusion coefficients on the metric edge $\mathbf e$. The weight associated to each node depends on the local orientation of the arborescence around the vertex (all apart one edge are oriented entering into the vertex) and the parameters describing the stochastic evolution on the node. In the continuous setting there is an extra factor, associated to the root $x$, that appears on the weight of each arborescence and is given by $\sigma_\mathbf e^{-2}(x)$.  The value of the invariant measure on each node is then adjusted depending again on the behaviour of the model on the vertices.

\smallskip

The strategy of the proof is the following. Before we compute the scaling limit of the combinatorial construction of the matrix tree Theorem for one dimensional diffusive random walks on a ring. We obtain that for any triple $(\alpha, Q, F)$ that correspond to the same $(b,\sigma)$ the limit is the same and coincides with the continuous construction above described. In this case a metric arborescence is obtained from the ring by just one single cut. The proof of this universal scaling limit is short and informal. Once obtained the guess on the form of the limiting construction we prove the invariance directly.

\smallskip
Models with equations and processes defined on metric graphs are used in several different applicative frameworks, like quantum mechanics \cite{QG}, traffic flow \cite{GP} and many other \cite{QG}. Our representation could be very useful in studying weak noise asymptotic \cite{FWb} of diffusions on metric graphs and in this limit it is very strongly connected with Hamilton Jacobi equations on metric graphs \cite{CM}.

\smallskip
We discussed the problem on a geometric framework that consists of several one dimensional spaces non trivially glued. We believe that a continuous version of the combinatorial construction exists in much more general frameworks, as for example for domains in $\mathbb R^d$ with $d\geq 2$.  
In this sense this paper is a first step toward the proof of a fascinating formula  like
\begin{equation}
\mu(x)= \frac{1}{\sigma^{2}(x)Z}\int\mathcal D(\tau_x)e^{\int_\Omega\left(b(y), \sigma^{-2}(y) v(y)\right)}\,.
\end{equation}
In the above equation $\Omega$ is a domain of $\mathbb R^d$ and $x\in \Omega$; $\int\mathcal D(\tau_x)$ is an integration over arborescenses rooted at $x$, the vector $v(y)$ denotes the direction at $y$ of the arborescence $\tau_x$ and $(\cdot ,\cdot)$ denotes the Euclidean scalar product. The challenge here is to give a meaning to all these objects.

There are also other  natural and interesting issues to be discussed in the framework of metric graphs. These are for example a determinantal representation and interpretation of the formulas, like in the discrete case; the relationship with the discrete time Markov chain obtained observing just the sequence of vertices visited; the connection with the theory of electrical networks on metric graphs \cite{QG}. We are not going to discuss here these issues.
\smallskip

The paper is organized as follows.

In Section 2 we shortly recall the classic Markov chain matrix tree Theorem for finite state irreducible continuous time Markov chains.

In Section 3 we discuss briefly the one dimensional diffusion processes. Then we recall the scaling limit of diffusive random walks giving the relation between the microscopic triple $(\alpha, Q, F)$ and the macroscopic pair $(b,\sigma)$. We discuss the scaling limit of the discrete arborescences and the corresponding weights getting formulas written in terms of continuous metric arborescences. Finally, we show by a direct computation that the representations obtained in the case of the circle and the interval give the correct result.

In Section 4 we describe shortly the metric graphs.

In Section 5 we discuss the definition of a diffusion process on a metric graph. This is done showing the form of the generator that depends on a collection of parameters related to the behaviour of the process in correspondence of the vertices.

In Section 6, we prove the validity of the representation formula for the invariant measure of a diffusion on a metric graph in terms of a normalized sum of weights associated to continuous metric arborescences obtained cutting the original metric graph on a finite number of points. We show moreover that the reversibility condition corresponds to the reversibility of a finite state effective Markov chain evolving on the vertices of the graph.

\section{Markov chain tree Theorem}
\label{MCTT}

Here we briefly recall a classic representation of the invariant measure of a finite irreducible Markov chain (see for example \cite{FWb} or \cite{PW} for a recent general overview or \cite{AT} for a simple proof).
The general framework is the following. We consider a Markov chain having transition graph $(V,E)$.
The finite set $V$ is the state space while the set of directed edges $E$ represents the collection of possible transitions of the chain. If $(x,y)\in E$ we have that the rate $r(x,y)$ of jump from $x$ to $y$ is strictly positive. We say that the directed edge $(x,y)$ exits from $x$ and enters into $y$.

The stationary condition that the invariant measure $\pi$ has to satisfy is given by
\begin{equation}\label{rice}
\pi(x)\sum_{y: (x,y)\in E} r(x,y)=\sum_{y:(y,x)\in E} \pi(y)r(y,x)\,, \qquad \forall x\in V\,.
\end{equation}
If the transition graph $(V,E)$ is strongly connected (any two points can be connected by directed paths) then the chain is irreducible and the invariant measure is unique and strictly positive. We restrict to this case.

\begin{definition}
Let $(V,E)$ be a directed graph. An \emph{arborescence} $\tau$ directed toward $x\in V$ is a spanning subgraph of $(V,E)$ such that:

\smallskip

\noindent 1) For each vertex $y\neq x$ there is exactly one directed edge exiting from $y$ and belonging to $\tau$;

\noindent 2) For any $y\in V$ there exists one directed path from $y$ to $x$ in $\tau$;

\noindent 3)  There are no edges exiting from $x$.

\smallskip

\noindent Let  $\mathcal T_x$ the set of \emph{arborescences} of $(V,E)$ directed toward $x\in V$.
\end{definition}

If the chain is irreducible, then $\mathcal T_x$ is not empty for any $x$.

Equivalently the arborescences in $\mathcal T_x$ can be characterized as follows. Take the transition graph $(V,E)$ and construct the corresponding undirected graph $(V,\mathcal E)$ obtained simply transforming each directed edge into an undirected one and removing all the multiple undirected edges obtained. An element $\tau\in \mathcal T_x$ is characterized by the fact that if we ignore orientation of the edges of $\tau$ we obtain a spanning tree of $(V,\mathcal E)$. Moreover, any directed edge in $\tau$ exiting from $y\neq x$ is directed according to the orientation obtained going along the unique un-oriented path from $y$ to $x$.

To any arborescence $\tau$ we associate a weight given by
\begin{equation}
\label{wfw}
R(\tau):=\prod_{e\in \tau} r(e)\,,
\end{equation}
where $r:E\to(0,\infty)$ are the transition rates.
In the above formula the product is over all the directed edges $e$ that are edges
of the arborescence $\tau$.

The Markov chain matrix tree Theorem claims that the invariant measure of the chain is given by
\begin{equation}\label{eq: FWformula}
\mu(x)=\frac{\sum_{\tau\in \mathcal T_x}R(\tau)}{\sum_{z\in V}\sum_{\tau\in \mathcal T_z}R(\tau)}\,.
\end{equation}
See \cite{PW} for a interpretation in terms of determinants of \eqref{eq: FWformula}.

\section{One dimensional diffusions}

Let $I$ be a finite interval  in $\mathbb{R}$ and let  us consider a one dimensional diffusion process determined by a stochastic differential equation
\begin{equation}
\label{diff}
dX_t=b(X_t)dt+\sigma(X_t)dW_t
\end{equation}
where $b$ is a  $C^1$-Lipschitz function, called \emph{drift},  and $\sigma$ is a $C^2$-Lipschitz function, called \emph{diffusion coefficient}.
Under these assumptions we have that the invariant measure $\mu=\mu(x)dx$ has a $C^2$-density that is a strong solution to
\begin{equation}\label{stazdiff}
\frac 12\partial^2_x\left(\sigma^2(x)\mu(x)\right)-\partial_x\left(b(x)\mu(x)\right)=0\, ,
\end{equation}
where $x$ belongs to the interior of $I$.
We assume strong regularity of the coefficients since we concentrate on the geometric construction of the invariant measures and try to minimize the unrelated technical details.
Equation \eqref{stazdiff} can be  very naturally written as follows. Consider a measure with  $C^2$-density $\nu=\nu(x)dx$
and define the corresponding \emph{probability current} as
\begin{equation}\label{curr}
J(\nu):=-\frac 12\partial_x\left(\sigma^2(x)\nu(x)\right)+b(x)\nu(x)\,.
\end{equation}
The stationary condition for the invariant measure $\mu$ can be written in terms of the current \eqref{curr} in one of the two equivalent conditions
\begin{equation}\label{divj}
\partial_xJ(\mu)=0\,, \qquad \Leftrightarrow \qquad  J(\mu)=\textrm{constant}\,.
\end{equation}
Let us call
\begin{equation}\label{defS}
s(x):=\frac{b(x)}{\sigma^2(x)}\,;\qquad S(x):=2\int_{x^*}^xs(y)dy\,,
\end{equation}
where $x^*$ is an arbitrary point.
The general solution to \eqref{stazdiff} on $\mathbb R$ is
\begin{equation}\label{ccc}
\mu(x)=\frac{1}{\sigma^2(x)}\left[k_1+k_2\int_{x^*}^xe^{-S(y)}dy\right]e^{S(x)},
\end{equation}
where $k_i$ are arbitrary constants that have to determined in order to obtain the invariant measure of \eqref{diff}. This procedure depends on the special geometrical frameworks and boundary conditions that we consider.

\smallskip

\paragraph*{\textbf{Example 1:} } Let us first consider the special case when the process \eqref{diff} is defined
on an interval $[a,b]$ with reflecting boundary conditions (see forthcoming Section \ref{forte} for a detailed discussion of boundary conditions). Since there is no flow
across the boundaries, the stationary condition \eqref{divj} coincides with $J(\mu)=0$, the process is always reversible and we easily get
\begin{equation}\label{facile}
\mu(x)=\frac{e^{S(x)}}{Z\sigma^2(x)}\,,
\end{equation}
where $Z$ is a normalization factor.
This means that we have to fix in \eqref{ccc} $k_2=0$ and the value of $k_1$ is then fixed by the normalization condition.

\smallskip

\paragraph*{\textbf{Example 2:} } Consider now the process \eqref{diff} on a ring of length one  $\mathcal S^1:=\mathbb R/\mathbb Z$. This is equivalent to fix the coefficients in \eqref{diff} periodic of period one. Given $z\in \mathbb R$ we denote by $\pi(z)\in \mathcal S^1$ its projection. We draw $\mathcal S^1$ as a ring on which
the anticlockwise direction corresponds to the direction of the  motion of  $\pi(z+t)$ for increasing $t$.

\smallskip

Given $x\neq y\in \mathcal{S}^1$ we call
$I^\pm[x,y]$ the closed intervals containing the points of $\mathcal{S}^1$ encountered
moving on the ring from $x$ to $y$ respectively  anticlockwise for the $+$ sign and clockwise
for the $-$ sign.

We will use integrals over oriented intervals of $\mathcal S^1$ so that our intervals will be always oriented. In particular the intervals $I^\pm[x,y]$ have always the orientations from $x$ to $y$. We have therefore that $I^+[x,y]$ and $I^-[y,x]$ have the same support but opposite orientations since $I^+[x,y]$ is anticlockwise oriented while $I^-[y,x]$ is clockwise oriented. When we write $\mathcal S^1$ we mean always the ring anticlockwise oriented.

Consider $I$ an oriented interval of $\mathcal S^1$ with extrema $x,y$ and orientation from $x$ to $y$. Let $z_1, z_2\in \mathbb R$ such that $x=\pi(z_1)$ and $z_2>z_1$ is the minimal element of $\mathbb R$ such that $y=\pi(z_2)$. If the orientation of the interval $I$ is the anticlockwise one then we define for a function $f:\mathcal S^1\to \mathbb R$ the integration over the oriented interval as
\begin{equation}\label{sembrainutile}
\int_I f(s)ds:=\int_{z_1}^{z_2}f(z)dz\,.
\end{equation}
If $\bar I$ is an interval of $\mathcal S^1$ having the same support of $I$ but opposite orientation then we define
\begin{equation}\label{sembraovvia}
\int_{\bar I} f(s)ds=-\int_I f(s) ds\,.
\end{equation}

\smallskip

The diffusion process \eqref{diff} on $\mathcal S^1$ may be non-reversible.
The condition of reversibility (see for example \cite{BGL}) is $s(x)=\nabla G(x)$ where $G$ is a function defined on $\mathcal{S}^1$ i.e. a periodic function of period $1$.
This condition is equivalent to have (recall definitions \eqref{defS})
\begin{equation}\label{bildetdif}
\int_{\mathcal{S}^1}s(x)dx=0\,.
\end{equation}
In particular the function $S$ defined in \eqref{defS} can be interpreted as a function on the ring $\mathcal{S}^1$ just in the reversible case. In the reversible case the stationarity condition becomes $J(\mu)=0$ and the invariant measure coincides
with \eqref{facile}. In the non-reversible case we have a more complicated solution. We will show that in this case the invariant measure can be very naturally represented by a continuous version of the combinatorial construction illustrated in Section \ref{MCTT}. We will then generalize this representation to arbitrary metric graphs.

\subsection{Scaling limit}

We discuss informally the diffusive scaling limit of a random walker. This is done to obtain the invariant measure of a diffusion process as the scaling limit of the invariant measure of the discrete walker. The computations for the scaling limit will be short and informal since once obtained
the limiting form of the measure we can prove directly that this is the correct one. The aim of the computation is to show that the basic structure of the combinatorial representation of Section \ref{MCTT} is preserved in the limit
getting a continuous version of the construction. This fact is shown in particular on a one dimensional ring.
\smallskip

Since we want a diffusive scaling limit we need to consider reversible random walks.
In particular we consider the most general reversible nearest neighbor random walk on the discrete circle with $N$ sites that we consider embedded into $\mathcal{S}^1$ with mesh $N^{-1}$. According to \cite{ACCG} this is determined by a weight function $\alpha_N :V\to \mathbb R^+$ and a weight function $Q_N:E\to \mathbb R^+$ such that $Q_N(x,y)=Q_N(y,x)$. A random walk is reversible if and only if the jump rates are fixed by
\begin{equation}\label{rates-rev}
r_N(x,y):=\alpha_N(x)Q_N(x,y)\,, \qquad y=x\pm \frac{1}{N}\,.
\end{equation}

We consider the case when the weight function $\alpha_N$ is obtained by the discretization of a $C^2$-function $\alpha:\mathcal{S}^1\to \mathbb R^+$ by fixing $\alpha_N(x):=\alpha(x)$ when $x\in V$.
The weight function $Q_N$ is likewise obtained by the discretization of a $C^2$-function $Q:\mathcal{S}^1\to \mathbb R^+$ by fixing $Q_N(x,y):=Q\left(\frac{x+y}{2}\right)$. We assume that $\alpha$ and $Q$ are strictly positive.

We can allow to perturb these rates switching on a weak external field. The scaling behavior stays again diffusive. We consider a $C^1$-vector field  $F$ on $\mathcal{S}^1$. This is a $C^1$-periodic function $F:\mathbb R\to \mathbb R$. The discretized version is a discrete vector field $F_N:E\to \mathbb R$ defined by
\begin{equation}\label{dische}
F_N(e):=\int_{e}F(x) dx\,, \qquad e\in E\,,
\end{equation}
where the integral in \eqref{dische} is the  integral on the oriented segment
going from the tail of the directed edge $e\in E$ to its head.
By definition we have $F_N(x,x+\frac{1}{N})=-F_N(x+\frac{1}{N},x)$.

The perturbed rates \cite{ACCG} are defined by
\begin{equation}\label{copiano}
r^F_N(x,y):=r_N(x,y)e^{F_N(x,y)}\,.
\end{equation}
We stress the difference between the two discretizations for $F_N$ and $Q_N$. We have indeed that $F_N(x,y)=O(N^{-1})$ while instead $Q_N(x,y)=O(1)$.

\smallskip

According to the general discussion in \cite{ACCG}, when the lattice is of mesh $1/N$ and the rates are rescaled by a factor of $N^2$ (diffusive rescaling) we obtain that the law of the random walk converges to the law of a diffusion process with a forward Kolmogorov equation for the evolution of the probability measures given by
\begin{equation}\label{ilgen}
\partial_t\mu_t(x)=-\partial_xJ(\mu_t)=\partial_x\Big[Q(x)
\partial_x\Big(\alpha(x)\mu_t(x)\Big)\Big]-2\partial_x\Big[\alpha(x)Q(x)F(x)\mu_t(x)\Big]\,.
\end{equation}
Comparing \eqref{ilgen} with \eqref{curr} we obtain the relation between the macroscopic parametrization of the diffusion process, that is given by $(b,\sigma)$ and the microscopic one that is determined by $(\alpha, Q, F)$. We obtain
\begin{equation}\label{arpane}
\left\{
\begin{array}{ll}
\sigma=\sqrt{2Q\alpha}\\
b=\alpha(2QF+\partial_x Q)\,.
\end{array}
\right.
\end{equation}
Let us discuss the inverse transformations of \eqref{arpane}. The first equation gives $\alpha=\frac{\sigma^2}{2Q}$. If we insert this in the second one we get
\begin{equation}\label{arpav}
\frac{\partial_x Q}{2Q}=s-F\,.
\end{equation}
Since the left hand side of \eqref{arpav} is a total derivative, i.e. its integral on the circle vanishes, we have that if a triple $(Q,\alpha, F)$ satisfies
\eqref{arpane} then we have
\begin{equation}\label{condE}
\int_{\mathcal S^1}F(x)dx= \int_{\mathcal S^1}s(x)dx\,.
\end{equation}
Once an external field $F$ satisfying \eqref{condE} has been fixed then the weights $Q, \alpha$ are uniquely determined as
\begin{equation}\label{eQa}
\left\{
\begin{array}{l}
Q(x)=ce^{[S+2 V](x)}\,,\\
\alpha(x)=\frac{\sigma^2(x)}{2c}e^{-[S+2V](x)}\,.
\end{array}
\right.
\end{equation}
Where $c$ is an arbitrary positive constant and
$$
[S+2V](x):=\int_{I^\pm[x^*,x]}2\left(s(z)-F(z)\right)dz\,,
$$
where $x^*\in \mathcal S^1$ is an arbitrary point and the sign $\pm$ can be chosen arbitrarily since the result does not change due to \eqref{condE}.

\begin{remark} The above computations are independent of the boundary conditions and they hold also in the case that the lattice is embedded on an interval with suitably boundary conditions. In this case the external field $F$ can be fixed arbitrarily since for any external field the equation
\eqref{arpav} can be solved in $Q$ on an interval. Given an arbitrary external field, If we call $V(x):=-\int_{x^*}^xF(y)dy$ and  $S(x):=2\int_{x^*}^xs(y)dy$
where $x^*$ is an arbitrary point of the interval, then the weights $Q, \alpha$ are uniquely determined up to the choice of an arbitrary positive constant $c$ as in \eqref{eQa}.
\end{remark}

\subsubsection{Scaling limit of the invariant measure on the ring}

Recall that the discrete
walker is evolving on a discrete ring of mesh $1/N$ embedded into $\mathcal S^1$ and
that we draw the lattice as a ring on which the anticlockwise orientation corresponds to move from site $x$ to site $x+\frac{1}{N}$.

We will use in the discrete setting the following notation similar to the continuous one. Given $x\neq y\in V$, with the symbol $I^+[x,y]$ we mean the subgraph of $(V,E)$ containing the vertices and the directed edges  that are visited by a walker moving from $x$ to $y$ anti-clockwise, $x$ and $y$ included. Likewise  with the symbol $I^-[x,y]$ we mean the subgraph of $(V,E)$, $x$ and $y$ included, containing the vertices and the directed edges that are visited by a walker moving from $x$ to $y$ clockwise. Note that according to our definition the vertices belonging to $I^+[x,y]$ and $I^-[y,x]$ are the same but they contain oppositely oriented edges.

We introduce the following notation
\begin{equation}\label{ipesi}
R^\pm_N(x,y):=\left\{
\begin{array}{ll}
\prod_{e\in I^{\pm}[x,y]}r_N^F(e) & y\neq x\,,\\
1 & y=x\,.
\end{array}
\right.
\end{equation}
For the rates \eqref{copiano} we have that \eqref{ipesi} becomes
\begin{equation}\label{ascsol}
R^{\pm}_N(x,y)=\alpha^{-1}(y)\left(\prod_{z\in I^\pm [x,y]}\alpha(z)\right)\left(\prod_{e\in
	 I^\pm[x,y]}Q(e)\right)e^{\int_{I^\pm[x,y]}F(z)dz}\,.
\end{equation}

Using the matrix tree Theorem discussed in Section \ref{MCTT} we can write the invariant measure for the walker on the ring of mesh $1/N$ as
\begin{equation}\label{mun}
\mu^N(x)=\frac{1}{Z_N}\sum_{y\in V}R_N^{+}(y+\frac{1}{N},x)R_N^{-}(y,x)\,,
\end{equation}
where $Z_N$ is a normalization factor. Using \eqref{ascsol}, for any $y$ we have
\begin{multline}
R^{+}_N\left(y+ \frac{1}{N},x\right)R^{-}_N(y,x) =  \left(\prod_{z\in V}\alpha(z)\right) \left(\prod_{e\in E}\sqrt{Q(e)}\right) \\
 \frac{e^{\int_{I^+[y+\frac{1}{N},x]}F(z)dz}e^{\int_{I^-[y,x]}F(z)dz}}{\alpha(x)Q(y,y+\frac{1}{N})}\,.
\end{multline}
Using the above formula we obtain therefore that \eqref{mun} converges to
\begin{equation}\label{raptorus}
\mu(x)=\frac{1}{Z'\alpha(x)} \int_{\mathcal S^1}dy \frac{e^{\int_{I^+(y,x)}F(z)dz}e^{\int_{I^-[y,x]}F(z)dz}}{Q(y)}\,,
\end{equation}
where $Z'$ is a suitable normalization constant.

\smallskip

Formula \eqref{raptorus} is written in terms of the parameters of the microscopic walker. Using relation \eqref{arpane} and its inverse we can
can show that  for any triple $(\alpha, Q, F)$ corresponding to a given $(b,\sigma)$ formula \eqref{raptorus} coincides with
\begin{equation}\label{raptorusb}
\mu(x)=\frac{1}{Z\sigma^2(x)} \int_{\mathcal S^1}dy\,  e^{\left[\int_{I^+[y,x]}s(z)dz+\int_{I^-[y,x]}s(z)dz\right]}\,,
\end{equation}
where $Z$ is a normalization constant.
Indeed according to \eqref{arpav} we have
\begin{multline}
e^{\int_{I^\pm(y,x)}F(z)dz}=e^{\int_{I^\pm(y,x)}s(z)dz-\int_{I^\pm(y,x)}\frac{\partial_z Q}{2Q}(z)dz}= e^{\int_{I^\pm(y,x)}s(z)dz}\Big(\frac{Q(y)}{Q(x)}\Big)^{\frac{1}{2}}\,\nonumber
\end{multline}
and \eqref{raptorusb} is obtained.

\smallskip

The geometric interpretation of \eqref{raptorusb} is very clear. Fix a point $x\in \mathcal S^1$ where we want to compute the density of the invariant measure. The density is then obtained summing some weights over all possible continuous arborescences of $\mathcal S^1$
directed toward $x$. A continuous directed arborescense is obtained cutting $\mathcal S^1$ on a point $y\in \mathcal S^1$  and orienting the segments  toward $x$ (see Figure \ref{cerchiotaglio} and Section \ref{MTG} for precise definitions). The weight of the oriented continuos arborescence is obtained multiplying by a factor of $\sigma^{-2}(x)$ the exponential of the sum of  integrals of the form $\int s $ over the oriented segments. The construction is therefore a direct continuous generalization of the discrete construction apart the appearance of the factor $\sigma^{-2}(x)$ related to the position of the root that it is not present in the discrete case.
\begin{figure}
	
	\centering
	
	\includegraphics{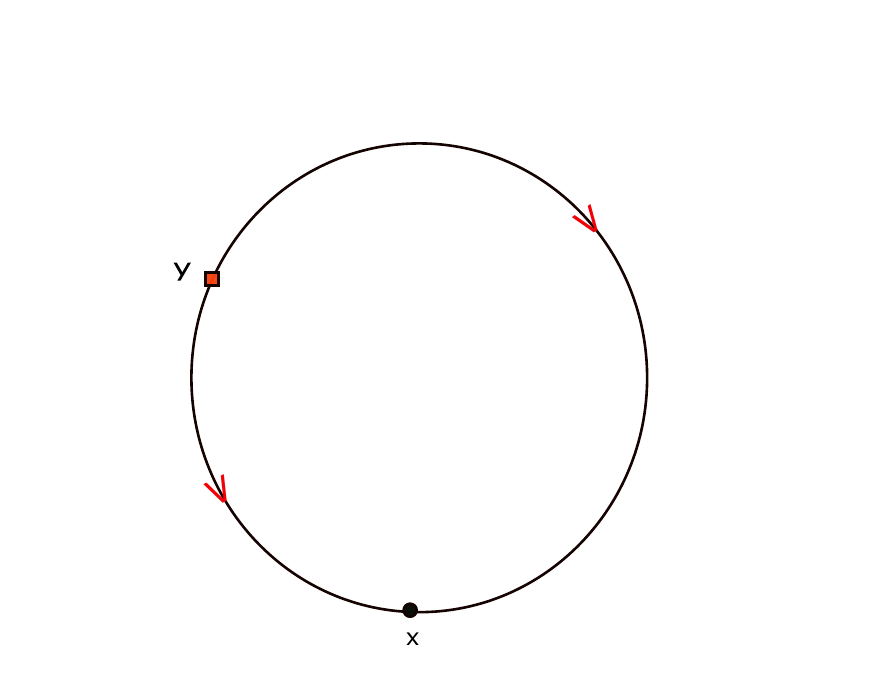}
	
	\caption{A continuous arborescence oriented toward the point $x$ obtained cutting $\mathcal S^1$ on the point $y$ and orienting $I^+[y,x]$ and $I^-[y,x]$ from $y$ to $x$.}\label{cerchiotaglio}
	
\end{figure}

\begin{remark}
Formula \eqref{raptorusb} can be written also like
\begin{equation}\label{wow}
\mu(x)=\frac{1}{Z\sigma^2(x)}\int_x^{x+1}e^{S(x)-S(y)}dy\,,
\end{equation}
that is a generalization of formula (2.3) in \cite{FG} that is particularly useful in the small noise limit.
\end{remark}

\subsection{Direct proof}

We computed shortly the scaling limit of the Markov chain matrix tree Theorem. We give now a direct proof that the obtained formula is the invariant measure of the limiting diffusion process. This is an elementary fact that will be however used in the following and similar computations will be relevant in the more general case. Once again we stress that the importance of formula \eqref{raptorusb} is indeed in the geometric interpretation.

\begin{lemma}\label{lemma31}
The unique invariant measure of the diffusion process \eqref{diff} on $\mathcal S^1$ is given by \eqref{raptorusb}.
\end{lemma}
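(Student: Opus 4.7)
The plan is to verify directly that the right-hand side of \eqref{raptorusb} satisfies the stationary condition \eqref{stazdiff}, equivalently that its current $J(\mu)$ is constant on $\mathcal S^1$, and then invoke uniqueness of the invariant measure for the irreducible diffusion on $\mathcal S^1$. Since formula \eqref{raptorusb} is written in terms of integrals over the oriented intervals $I^\pm[y,x]$, my first step is to rewrite it in a form suitable for calculus. I lift $s$ to a $1$-periodic function on $\mathbb R$, pick an arbitrary primitive $S$ of $2s$ on $\mathbb R$, and set $B:=\int_{\mathcal S^1} s(z)\,dz$, so that $S(v+1)=S(v)+2B$. Choosing a lift $\hat x$ of $x$ and, for each $y\in\mathcal S^1$, the unique lift $\hat y\in[\hat x,\hat x+1)$, the definitions \eqref{sembrainutile}--\eqref{sembraovvia} give
\begin{equation*}
\int_{I^+[y,x]}s\,dz=\int_{\hat y}^{\hat x+1}s\,dz,\qquad \int_{I^-[y,x]}s\,dz=-\int_{\hat x}^{\hat y}s\,dz,
\end{equation*}
so the sum in the exponent of \eqref{raptorusb} reduces to $\tfrac12[S(\hat x+1)+S(\hat x)]-S(\hat y)=S(\hat x)+B-S(\hat y)$.

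Substituting and changing variables $u=\hat y-\hat x\in[0,1)$, the density \eqref{raptorusb} becomes
\begin{equation*}
\mu(x)=\frac{C\,e^{S(\hat x)}}{\sigma^2(x)}\int_{\hat x}^{\hat x+1}e^{-S(v)}\,dv
\end{equation*}
for a positive constant $C$ (absorbing $e^B$ into the normalization). The next step is to check that this expression is well-defined on $\mathcal S^1$, i.e.\ invariant under $\hat x\mapsto\hat x+1$. Using $\sigma^2$ periodic, $e^{S(\hat x+1)}=e^{2B}e^{S(\hat x)}$, and the change of variables $v=w+1$ in the integral (which produces a factor $e^{-2B}$), the two exponential factors cancel and periodicity follows.

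For the stationarity, I compute directly from \eqref{curr}. Writing $\sigma^2\mu=(C/Z)\,e^{S(x)}\int_x^{x+1}e^{-S(v)}\,dv$, differentiating and using $S'=2s$ and the fundamental theorem of calculus for the moving endpoints yields
\begin{equation*}
\partial_x\bigl(\sigma^2(x)\mu(x)\bigr)=2s(x)\sigma^2(x)\mu(x)+C\bigl(e^{-2B}-1\bigr),
\end{equation*}
since $e^{S(x)-S(x+1)}=e^{-2B}$. Because $s\sigma^2=b$, substituting into $J(\mu)=-\tfrac12\partial_x(\sigma^2\mu)+b\mu$ cancels the $b\mu$ terms and leaves the constant $C(1-e^{-2B})/2$. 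Thus $\partial_x J(\mu)=0$, which is exactly \eqref{divj}, so $\mu$ solves \eqref{stazdiff}. After normalization to a probability density on $\mathcal S^1$, uniqueness of the invariant measure for the irreducible diffusion \eqref{diff} on $\mathcal S^1$ concludes the proof.

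The only delicate point is bookkeeping the orientation conventions \eqref{sembrainutile}--\eqref{sembraovvia} and the relation between the circle integration in \eqref{raptorusb} and ordinary integration on a fundamental domain of $\mathbb R$; once that is made explicit, the verification of constancy of $J(\mu)$ is elementary. The two boundary contributions from differentiating $\int_x^{x+1}e^{-S(v)}\,dv$ are precisely what forces $J(\mu)$ to be the constant $C(1-e^{-2B})/2$, vanishing exactly in the reversible case $B=0$, as it should by \eqref{bildetdif}.
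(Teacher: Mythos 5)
Your proof is correct, and while it shares the paper's overall strategy (verify that $J(\mu)$ is constant, then invoke uniqueness), the computational route is genuinely different. The paper keeps the representation as an integral over cut points $y$, differentiates the two-variable integrand $\psi(y,x)$ for $y\neq x$, isolates the jump $\Delta\psi$ of $\psi$ across $y=x$, and obtains $\partial_x\int_{\mathcal S^1}\psi(y,x)\,dy=2s(x)\int_{\mathcal S^1}\psi\,dy+\Delta\psi$ via an $\epsilon$-splitting of the domain together with an (unjustified but true) claim that the contribution from the small interval around $x$ is negligible. You instead first collapse the arborescence representation to the closed form $\mu(x)=\frac{C}{\sigma^2(x)}e^{S(\hat x)}\int_{\hat x}^{\hat x+1}e^{-S(v)}\,dv$ --- which is exactly formula \eqref{wow} stated without proof in the Remark following the lemma --- check its periodicity, and then get the constant current by a one-line application of the fundamental theorem of calculus to the moving endpoints; the boundary terms $e^{-2B}-1$ play the role of the paper's $\Delta\psi$, and indeed your constant $\tfrac{C}{2}(1-e^{-2B})$ agrees with the paper's $-\Delta\psi/(2Z)$. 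Your version buys a fully rigorous differentiation (no negligibility claim needed) and proves the Remark's formula \eqref{wow} along the way; the paper's version keeps the geometric ``sum over cuts'' structure visible throughout, which is the form that generalizes to Theorem \ref{ilteo}. One small slip: the constant in your display for $\sigma^2\mu$ is written $(C/Z)$ after you had already absorbed $1/Z$ and $e^{B}$ into $C$; this is purely cosmetic and does not affect the argument.
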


\begin{proof}
We prove this fact by a direct computation. Uniqueness is classic, see for example \cite{BGL}. A key property is that when $y\neq x$ we have
\begin{equation}
\partial_x\left[\int_{I^+[y,x]}s(z)dz\right]=\partial_x\left[\int_{I^-[y,x]}s(z)dz\right]=s(x)\,.
\end{equation}
By this computation we deduce that if we call $\psi(y,x)$ the integrand in \eqref{raptorusb}, then this function is differentiable when $x\neq y$ and we have
\begin{equation}
\partial_x\psi(y,x)=2s(x)\psi(y,x)\,, \qquad x\neq y\,.
\end{equation}
Moreover $\psi$ has a discontinuity at $y=x$ given by
\begin{equation}\label{dd}
\Delta\psi(x,x)=\lim_{\epsilon \downarrow 0}\psi(x-\epsilon, x)-\psi(x+\epsilon, x)=
e^{-\int_{\mathcal S^1}s(z)dz}-e^{\int_{\mathcal S^1}s(z)dz}\,.
\end{equation}
Recall that as usual in the above formula we considered $\mathcal S^1$ anticlockwise oriented.
Note that the jump at the discontinuous points does not depend on $x$ and therefore we call just $\Delta\psi$ the right hand side of \eqref{dd}.
We can proceed as follows
\begin{align*}
&\partial_x\left(\int_{\mathcal S^1}\psi(y,x)dy\right)\\
&=\partial_x\left(\int_{I^+[x+\epsilon,x-\epsilon]}\psi(y,x)dy\right)+\partial_x\left(\int_{I^+[x-\epsilon,x+\epsilon]}\psi(y,x)dy\right)\\
&=2s(x)\left(\int_{I^+[x+\epsilon,x-\epsilon]}\psi(y,x)dy\right)
+\psi(x-\epsilon, x)-\psi(x+\epsilon, x)\\
&+\partial_x\left(\int_{I^+[x-\epsilon,x+\epsilon]}\psi(y,x)dy\right)\,.
\end{align*}
The last term in the above chain can be shown to be negligible in the limit $\epsilon\downarrow 0$ so that we deduce taking this limit on the right hand side of the above computation
\begin{equation}
\partial_x\left(\int_{\mathcal S^1}\psi(y,x)dy\right)=2s(x)\left(\int_{\mathcal S^1}\psi(y,x)dy\right)+\Delta \psi\,.
\end{equation}
Inserting these computations into \eqref{curr} we obtain
\begin{equation}
J(\mu)=-\frac{\Delta\psi}{2Z}\,
\end{equation}
that is constant and represents the typical value of the current across the ring. We have therefore that \eqref{divj} is satisfied and this implies that \eqref{raptorusb} is the unique invariant measure of \eqref{diff} on $\mathcal S^1$. This is because  it is naturally periodic, it is not negative, normalized to one and satisfies \eqref{stazdiff}. \end{proof}

\smallskip

\begin{remark}
 In the case of an interval $[a,b]$ there are no cycles and once fixed the point $x$ where to compute the density there are no cuts to be done. The oriented arborescence is obtained just orienting the intervals like $[a,x]$ and $[b,x]$, see Figure \ref{intervallone}. We get from the scaling limit
\begin{equation}\label{facile2}
\mu(x)=\frac{1}{Z\sigma^2(x)}  e^{\left[\int_{a}^xs(z)dz+\int_{b}^xs(z)dz\right]}\,.
\end{equation}
Formula \eqref{facile} coincides with \eqref{facile2} since
\begin{equation}\label{facile3}
\int_{a}^xs(z)dz+\int_{b}^xs(z)dz=2\int_{x^*}^xs(z)dz+c\,,
\end{equation}
with $c$ a suitable additive constant.

\end{remark}
\begin{figure}
	
	\centering
	
	\includegraphics{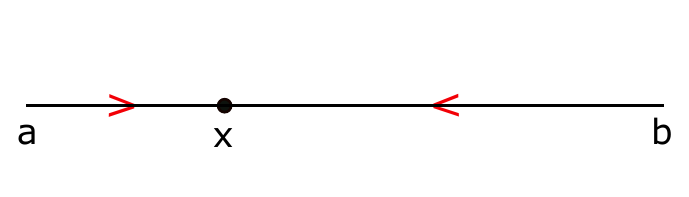}
	
	\caption{The continuous arborescence in the case of the interval. There are no cuts to be done and the orientation is drown with red arrows.}\label{intervallone}
	
\end{figure}

\section{Metric graphs and metric arborescences}\label{MTG}

We give a quick and informal description of a family of metric spaces called \emph{metric graphs}. We refer for more details for example to \cite{M} or \cite{BBI} Section 3.2.2.

We describe a finite metric graph $\mathcal G=(V,\mathbf E)$.
The set of vertices $V$ is a finite set. The set of metric edges $\mathbf E$ is a finite set too containing $|\mathbf E|$ metric edges. An element $\mathbf e\in \mathbf E$ is identified with an open interval $\mathbf e=(0,\ell_{\mathbf e})$, $\ell_{\mathbf{e}}\in\mathbb{R}$. A metric graph is a metric space obtained gluing the intervals associated to the edges to  the vertices in $V$. More precisely we glue the edge $\mathbf e\in \mathbf E$ to the vertex $v\in V$ identifying one of the two extrema of $\mathbf e$
with $v$. Note that, as explained in the following, the metric edges have an intrinsic orientation (the one corresponding to increasing coordinates) and the gluing can be done in two different ways identifying $v$ with the endpoint corresponding to the coordinate $0$ or to the coordinate $\ell_{\mathbf e}$. If we disregard orientation the two identifications are equivalent. We allow also for the third possibility when $v$ is identified with both endpoints of $\mathbf e$ obtaining a ring of length $\ell_e$ with the marked point $v$. When a vertex is identified  with one endpoint of a metric edge we say that the edge is incident to the vertex. Every endpoint of an edge is identified with exactly one vertex. We cannot have endpoints of edges not identified with any vertex.

The metric graph is obtained starting from the collection of vertices and metric edges and performing a finite number of identifications of vertices and endpoints according to the above rules.

The distance between two points on $\mathcal G$ is the length of the minimal path moving along the edges and going from one endpoint of one edge to an endpoint of another edge if both are identified with the same vertex.

\smallskip

If we disregard orientation of the metric edges and consider them just as segments of length $\ell_\mathbf e$ we can identify the metric structure of the metric graph giving just its combinatorial arrangement and the lengths.
More precisely given $v\in V$ we denote by $A^+(v),A^-(v)\subset \mathbf E$ the metric edges that are respectively exiting from the vertex $v$ and entering toward the vertex $v$. Moreover we define $A(v)=A^+(v)\cup A^-(v)$. Given a metric graph $(V,\mathbf E)$ we construct the corresponding un-oriented graph $(V, \mathcal E)$ defined as follows. The set of vertices  is again $V$ and given $v,w\in V$ we have that there is a number of un-oriented edges  $\{v,w\}\in \mathcal E$ equal to $\Big|A(v)\cap A(w)\Big|$. We have moreover a number of loops $\{v,v\}$ equal to $\Big|A^+(v)\cap A^-(v)\Big|$. The un-oriented graph is weighted and the edge $\{v,w\}\in \mathcal E$ corresponding to $\mathbf e\in \mathbf E$ has a weight given by $\ell_\mathbf e$.

The metric graph $(V, \mathbf E)$ is connected if $(V,\mathcal E)$ is connected and it is a metric tree if $(V, \mathcal E)$ is a tree.

\smallskip

Formally this can be equivalently formulated as follows. A path form $x\in (V , \mathbf E)$ to $y\in (V, \mathbf E)$ is a continuous map $\psi:[a,b]\to (V, \mathbf E)$, where $a<b$ are some given real parameters, and such that $\psi(a)=x$ and $\psi(b)=y$. A metric graph is connected if for any pair $x,y\in (V, \mathbf E)$ there exists a path from $x$ to $y$. A metric graph is called a metric tree if for any two points $x,y \in (V, \mathbf E)$ there is an unique injective path, up to reparametrizations, going from $x$ to $y$.

\smallskip

To draw pictures it is very useful to consider the graph embedded into $\mathbb R^d$ but the specific embedding is irrelevant. The vertices are therefore points
of $\mathbb R^d$. The edges are disjoint and not self-intersecting regular curves of length $\ell_\mathbf e$. Every edge connects two vertices of $V$ or just one in the case of loops. An edge $\mathbf e\in \mathbf E$ is parametrically described by the corresponding interval $(0,\ell_{\mathbf e})$, with $\ell_{\mathbf{e}}\in(0,\infty)$, and a $C^1$ map $\phi_{\mathbf e}:(0,\ell_{\mathbf e})\to \mathbb R^d$ such that $|\phi'_{\mathbf e}|(x)=1$  for any $x\in (0,\ell_\mathbf e)$. The vertices connected by the edge $\mathbf e$ are recovered by
$\lim_{x\downarrow 0}\phi_{\mathbf e}(x)$ and $\lim_{x\uparrow \ell_{\mathbf e}}\phi_{\mathbf e}(x)$.

\smallskip

We allow for the possibility of cut some of the edges of the metric graph. Given $\mathbf e\in \mathbf E$ and $x\in \mathbf e$ when we cut the metric graph at $x$ we remove the point $x$ from the metric graph $(V, \mathbf E)$. The new metric graph obtained after the cut is as follows. The structure remains unchanged for all the edges that do not contain the cutting point while the edge $\mathbf e$ containing $x$ is removed and substituted by two different edges
$\mathbf{e}_1=(0,x)$ and $\mathbf{e}_2=(0,\ell_\mathbf e-x)$. Here and hereafter with abuse of notation we call $x\in \mathbf e$ both the point on the edge and its coordinate
on the corresponding interval $(0,\ell_\mathbf e)$. The endpoint $0$ of the first edge is identified with the same vertex of the endpoint $0$ of $\mathbf e$. The endpoint $\ell_\mathbf e-x$ of the second edge is identified with the same vertex of the endpoint $\ell_\mathbf e$ of $\mathbf e$. Finally we add two new vertices $v_1, v_2$ and identify $v_1$ with the endpoint $x$ of $\mathbf{e}_1$ and $v_2$ with the endpoint $0$ of $\mathbf{e}_2$.

Note that before the cut the points of $\mathbf e$ with coordinates $x-\epsilon$ and $x+\epsilon$ for $\epsilon$ small enough are at distance $2\epsilon$ while this is no more the case after the cut.

\smallskip

Every edge of a metric graph is naturally oriented according to the increasing direction of the coordinates. The opposite orientation corresponds to the decreasing direction. There are just two possible orientations for each edge. An orientation of a metric graph is simply a choice between the two possible orientations for each edge. An orientation is
represented drawing an arrow on each edge. The canonical orientation of a metric graph is the one on which each edge is increasingly oriented.
Given a vertex $v\in V$, according to our notation, $A^+(v)$ and $A^-(v)$ are the sets of edges incident to $v$ that are respectively canonically oriented exiting from $v$ or entering into $v$.
A path $\psi:[a,b]\to (V, \mathbf E)$ is compatible with the orientations of the edges if when restricted to each edge the map $\psi$ is increasing or decreasing depending if the edge is increasingly or decreasingly oriented.

\smallskip

A \emph{metric arborescence} oriented toward $x\in \mathbf E$ is a metric graph that is a metric tree
and moreover all the edges are oriented toward the root $x$. This means that given any $y\in (V, \mathbf E)$ there exists an unique injective path from $y$ to $x$
and the path is compatible with the orientations. This essentially means that it is possible to reach the root $x$ starting from any other point $y$ and moving on the graph following the edges according to their orientation. Note that on a metric arborescence the edge containing the root is divided into two parts oppositely oriented.

Given a metric graph $\mathcal G$ we can obtain a metric arborescence cutting some of the edges and suitably orienting the edges, see Figure \ref{arbor} for an example and the following Section \ref{crim} for a more detailed description.

\begin{figure}
	
	\centering
	
	\includegraphics{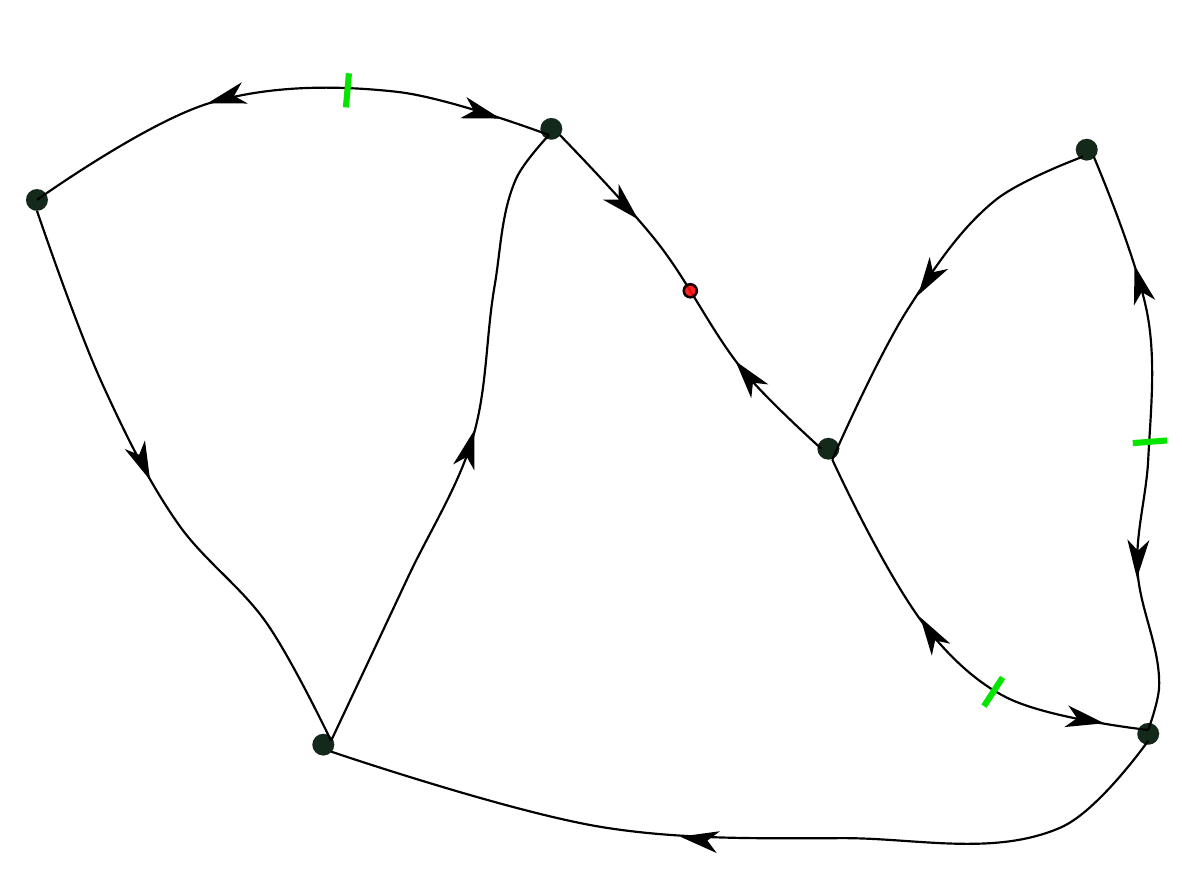}
	
	\caption{A metric graph embedded into $\mathbb R^d$ and an associated oriented arborescence obtained
	cutting the edges on the green slices. The root of the arborescence is drawn as a red dot.}\label{arbor}
	
\end{figure}

\section{Diffusions on metric graphs}\label{forte}

A walker is moving randomly on $\mathcal G=(V,\mathbf E)$ according to the following mechanism. For each edge $\mathbf e\in \mathbf E$ we fix coefficients  $(b_{\mathbf e}, \sigma_{\mathbf e})$ defined on the intervals $(0,\ell_{\mathbf e})$, $\ell_{\mathbf{e}}\in\mathbb{R}$, and again for simplicity we require $b_{\mathbf e}\in C^1$ and $\sigma_{\mathbf e}\in C^2$. The coefficients can be extended continuously on $[0,\ell_{\mathbf e}]$ and we require $\min_{x\in[0,\ell_{\mathbf e}] }\sigma_\mathbf e(x)>0$. When the walker is on the edge $\mathbf e$ then
the coordinate of the walker on $(0,\ell_{\mathbf e})$ evolves as a diffusion process $X_{\mathbf e}(t)$ satisfying the equation \eqref{diff}  with coefficients $(b_{\mathbf e}, \sigma_{\mathbf e})$.  When the walker reaches a vertex $v\in V$ then she can spend some time there and then the new edge on which she is continuing the evolution is chosen randomly  according to a given probability distribution on $A(v)$. The precise formulation of the dynamics can be formalized using the excursions of the diffusion processes, see for example  \cite{BPY,W}. In the following we will use an alternative approach to define the dynamics considering the corresponding generator.

We illustrate a formal definition of the diffusion processes on a metric graph describing the structure of the generator. We state the basic facts and refer to \cite{FS,FW1,FW2} for a general discussion. We give a short and essential summary of the features of the possible dynamics, all of them depend on the behavior of the walker on the nodes.

All the possible Feller processes with continuous sample path and such that inside each edge $\mathbf e\in \mathbf E$ the random dynamics coincides with a diffusion with parameters $(b_\mathbf e, \sigma_\mathbf e)$ are determined by a family of nonnegative parameters $\alpha$'s. In particular we have a family of parameters associated to the vertices $\left(\alpha_v\right)_{v\in V}$ and a family of parameters associated to the pairs $(v,\mathbf e)$ such that $v\in V$ and $\mathbf e\in A(v)$. We denote them as $\left(\alpha_{v,\mathbf e}\right)_{v\in V,\mathbf e\in A(v)}$.
The parameters must satisfy the constraints
\begin{equation}
\alpha_v+\sum_{\mathbf e\in A(v)}\alpha_{v,\mathbf e}>0\,, \qquad \forall \ v\in V\,.
\end{equation}
Once a collection of parameters is fixed then the process is defined by its generator $\mathcal A$ that is a linear operator on a suitable subset of $C(\mathcal G)$, the set of continuous function on the metric graph $\mathcal G$. To describe the form of the generator we need some notation. To avoid problems of irreducibility we consider always metric graphs that are connected and we will assume that each coefficient $\alpha_{v,\mathbf e}$ is strictly positive.

A function $f\in C(\mathcal G)$ is determined by a family of continuous functions $f_\mathbf e:(0,\ell_\mathbf e)\to \mathbb R$. The value of the function $f$ on a point $x\in \mathbf e$ is given by $f(x):=f_\mathbf e\left(x\right)$. Here again with abuse of notation we call $x\in \mathbf e$ both the point and its coordinate in $(0,\ell_\mathbf e)$. In order that $f\in C\left(\mathcal G\right)$ we need to impose the condition that given any $v\in V$ there is a real number $f(v)\in \mathbb R$ (the value of the function at the vertex $v$) such that
\begin{equation}
f(v)=\lim_{x\uparrow \ell_\mathbf e} f_\mathbf e(x)=\lim_{x\downarrow 0} f_{\mathbf e'}(x)\,, \qquad \forall\ \mathbf e\in A^-(v)\,,\, \mathbf e'\in A^+(v)\,.
\end{equation}

Given $v\in V$, $\mathbf e\in A(v)$ and $f\in C(\mathcal G)$ we define $D_\mathbf e f(v)$ the exiting derivative of $f$ at $v$ along $\mathbf e$ as
\begin{equation}
D_\mathbf e f(v):=\left\{
\begin{array}{ll}
\lim_{x\downarrow 0}\frac{f_\mathbf e(x)-f(v)}{x}\,, & \mathbf e\in A^+(v)\,,\\
\lim_{x\uparrow \ell_\mathbf e}\frac{f_\mathbf e(x)-f(v)}{\ell_\mathbf e-x}\,, & \mathbf e\in A^-(v)\,,
\end{array}
\right.
\end{equation}
when the limits exist.

\smallskip

We define now the generator $\mathcal A$ of our dynamics.
\begin{definition}[\emph{Generator of the dynamics}]\label{def}
Consider $f\in C(\mathcal G)$ with $f_\mathbf e\in C^2(0,\ell_\mathbf e)$. For an $x\in \mathbf e \in \mathbf E$ we define
\begin{equation}
[\mathcal Af](x):=L_\mathbf e f_\mathbf e\left(x\right)\,,
\end{equation}
where $L_\mathbf e$ is the generator of the diffusion process with parameters $(b_\mathbf e, \sigma_\mathbf e)$, i.e.
\begin{equation}\label{gensue}
L_{\mathbf e} f_{\mathbf e}(x):= \frac12 \sigma^2_{\mathbf e}(x)\partial^2_xf_\mathbf e(x)+b_\mathbf e(x)\partial_xf_\mathbf e(x)\,, \qquad x\in (0,\ell_\mathbf e)\,.
\end{equation}
The operator $\mathcal A$ on the nodes is defined by the relation
\begin{equation}\label{nodecond}
\alpha_v [\mathcal Af](v)=\sum_{\mathbf e\in A(v)}\alpha_{v,\mathbf e} D_\mathbf e f(v)\,, \qquad v\in V\,.
\end{equation}
The domain of definition $\mathcal D(\mathcal A)\subseteq C(\mathcal G)$ of the operator $\mathcal A$
is the set of functions for which all the derivatives exist and such that $\mathcal A f\in C\left(\mathcal G\right)$.
\end{definition}

The following result classifies all the possible diffusions on metric graphs.
\begin{theorem} \cite{FS}, \cite{FW1}, \cite{FW2}\label{FW}
The operator $\mathcal A$ as defined in Definition \ref{def} is the infinitesimal generator of a strongly continuous semigroup of operators on $C(\mathcal G)$ corresponding to a conservative Markov process on $\mathcal G$ with continuous paths. The following statements hold:

\begin{itemize}
	\item Before the process leaves an edge $\mathbf e$ it evolves like a diffusion process with generator $L_\mathbf e$ in \eqref{gensue}
	
	\item If $\alpha_v=0$ then the process spends almost surely zero time at the vertex $v\in V$
	
	\item If $\alpha_v=0$ for any $v\in V$ then the distribution of the process at any given time has a density with respect to a measure that is zero on all the vertices $v\in V$
\end{itemize}
Conversely if $X(t)$ is a Feller conservative Markov process on $\mathcal G$ that coincides,  before leaving $\mathbf e$, with a diffusion having generator \eqref{gensue} then its infinitesimal generator coincides with $\mathcal A$ in Definition \ref{def} for a suitable choice of the parameters. Moreover if the process $X$ spend almost surely zero time at $v\in V$ then necessarily $\alpha_v=0$.
\end{theorem}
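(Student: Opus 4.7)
The plan is to apply the Hille-Yosida-Phillips theorem to show that $\mathcal A$ in Definition \ref{def} generates a strongly continuous contraction semigroup on $C(\mathcal G)$, from which a Feller process with the required properties is then constructed in the standard way. I would first verify that $\mathcal D(\mathcal A)$ is dense in $C(\mathcal G)$: smooth bump functions supported in the interior of single edges already lie in $\mathcal D(\mathcal A)$, and near each vertex they can be combined with polynomial corrections enforcing both continuity and the Wentzell relation \eqref{nodecond}. Dissipativity is established by a maximum-principle argument: if $f\in\mathcal D(\mathcal A)$ attains its supremum at an interior point $x_0\in\mathbf e$ then $f_\mathbf e'(x_0)=0$ and $f_\mathbf e''(x_0)\le 0$, so $[\mathcal A f](x_0)\le 0$ by \eqref{gensue}; if instead the supremum is attained at a vertex $v$, then $D_\mathbf e f(v)\le 0$ for every $\mathbf e\in A(v)$, and together with $\alpha_{v,\mathbf e}>0$ and \eqref{nodecond} this yields $[\mathcal A f](v)\le 0$ (directly when $\alpha_v>0$, and by continuity of $\mathcal A f$ from inside the incident edges when $\alpha_v=0$). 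Hence $\|\lambda f-\mathcal A f\|_\infty\ge \lambda\|f\|_\infty$ for every $\lambda>0$.

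\smallskip

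The delicate step is the range condition: for $\lambda>0$ and $g\in C(\mathcal G)$ the equation $\lambda f-\mathcal A f=g$ must be solvable in $\mathcal D(\mathcal A)$. On each edge $\mathbf e$ this reduces to the linear second-order ODE
\[
\lambda f_\mathbf e(x)-\tfrac12\sigma_\mathbf e^2(x)f_\mathbf e''(x)-b_\mathbf e(x)f_\mathbf e'(x)=g_\mathbf e(x),
\]
whose general solution depends on two constants per edge, giving $2|\mathbf E|$ unknowns. At each vertex $v$ the continuity condition imposes $|A(v)|-1$ linear relations among the endpoint values, and \eqref{nodecond} adds one more, for a total of $\sum_v|A(v)|=2|\mathbf E|$ equations. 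The resulting square linear system is injective by the dissipativity already established, hence invertible. I would make this concrete by constructing fundamental solutions of the homogeneous ODE on each edge and assembling the vertex constraints into a matrix whose determinant does not vanish; alternatively one can use a Green-function fixed-point argument on each edge coupled through the vertex conditions.

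\smallskip

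Given the semigroup, the three listed properties follow directly: inside each $\mathbf e$ one has $\mathcal A|_\mathbf e=L_\mathbf e$, so excursions from $v$ are $(b_\mathbf e,\sigma_\mathbf e)$-diffusions; the expected time spent at $v$ up to a stopping time is proportional to $\alpha_v$ through the resolvent identity $\mathbb E\int_0^\cdot \id_{\{v\}}(X_s)\,ds$, so $\alpha_v=0$ forces zero occupation at $v$; when this holds at every vertex, the smoothing of the diffusion on each edge yields the absolute-continuity statement. For the converse, the generator of any Feller process with continuous paths that coincides inside each edge with the $(b_\mathbf e,\sigma_\mathbf e)$-diffusion is characterized at each vertex by Wentzell's classification of admissible boundary conditions, which in the present continuous-path setting forces a nonnegative linear combination of the exiting derivatives $D_\mathbf e f(v)$ and of $[\mathcal A f](v)$; relabelling the coefficients as $\alpha_{v,\mathbf e}$ and $\alpha_v$ gives \eqref{nodecond}, and $\alpha_v=0$ precisely when the associated local time contributes no occupation. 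The main obstacle throughout is the range condition together with verifying the regularity $\mathcal A f\in C(\mathcal G)$ at each vertex, which is precisely the compatibility enforced by \eqref{nodecond} and where the technical content of \cite{FS,FW1,FW2} is concentrated.
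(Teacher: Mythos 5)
The paper does not prove this statement: Theorem \ref{FW} is imported verbatim from the cited references \cite{FS}, \cite{FW1}, \cite{FW2} (see also \cite{KPS}), and the authors use it as a black box to justify Lemma \ref{dot} and the subsequent construction. So there is no in-paper proof to compare against; what follows is an assessment of your sketch on its own terms.

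Your Hille--Yosida outline is the standard analytic route and the skeleton is sound: the counting for the range condition is correct ($2|\mathbf E|$ integration constants against $\sum_{v}|A(v)|=2|\mathbf E|$ vertex relations, with continuity of $\mathcal Af$ across a vertex being automatic for solutions of $\lambda f-\mathcal Af=g$ since there $L_\mathbf e f_\mathbf e=\lambda f_\mathbf e-g_\mathbf e$ extends continuously to the common value $\lambda f(v)-g(v)$), and injectivity of the resulting square system does follow from the dissipativity estimate. Two points deserve more care than you give them. First, to obtain a \emph{Feller} semigroup (positivity-preserving and conservative, hence a Markov process) you need the positive maximum principle rather than bare dissipativity; your maximum-principle argument essentially delivers it, but at a vertex with $\alpha_v=0$ the conclusion $[\mathcal Af](v)\le 0$ requires an extra ODE comparison step (all $D_\mathbf e f(v)$ vanish there, and one must rule out $\lim_{x\to v}L_\mathbf e f_\mathbf e(x)>0$ by showing this would force $f_\mathbf e>f(v)$ near $v$). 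Continuity of sample paths, which the theorem asserts, also does not come for free from Hille--Yosida and needs the locality of $\mathcal A$. Second, and more substantially, the converse direction is where the real content of \cite{FS,FW1,FW2} lies: the general Wentzell boundary condition at a vertex also admits nonlocal (jump) and killing terms, and one must show that continuity of paths and conservativeness exclude these, leaving exactly the form \eqref{nodecond}; ``relabelling the coefficients'' understates this step. The second and third bullets (zero occupation time at $v$ iff $\alpha_v=0$, and absolute continuity of the one-dimensional marginals) are only gestured at and would need the excursion/local-time analysis of the cited works. In short: a correct and reasonable sketch of the known proof strategy, with the genuinely hard steps (converse classification, path continuity, occupation-time statements) left at the level of citation --- which is, in fairness, exactly what the paper itself does.
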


We discuss now the invariant measures of the diffusion processes on the metric graphs. The conditions that we get can be easily described in terms of a divergence free condition for the probability currents \eqref{curr} on the edges. Fix an arbitrary reference orientation for each edge that for us will be always the canonical one. For $v\in V$, $\mathbf e\in A(v)$  and a collection of functions $\left(g_\mathbf e(x)\,,\, x\in (0,\ell_\mathbf e)\right)_{\mathbf e \in \mathbf E}$ we denote by
\begin{equation}\label{vieris}
g_{\mathbf e}(v):=\lim_{y\in \mathbf e, y\to v}g_\mathbf e(y)\,.
\end{equation}

The invariant measures $\mu$ for this class of processes contain atomic components on the vertices and are absolutely continuous with respect to the Lebesgue measure on the edges. A measure of this type is denoted by
\begin{equation}\label{cms}
\mu=\Big\{\left(\mu_v\right)_{v\in V}, \left(\mu_\mathbf e(x)dx\right)_{\mathbf e\in \mathbf E}\Big\}\,.
\end{equation}
The number $\mu_v$ is the weight of the atomic component on the vertex $v\in V$ while $\mu_\mathbf e(x)dx$ is the density of the absolutely continuous component on the edge $\mathbf e\in \mathbf E$. The normalization condition is
\begin{equation}\label{norm}
\sum_{v\in V}\mu_v+\sum_{\mathbf e\in \mathbf E}\int_0^{\ell_\mathbf e}\mu_\mathbf e(x)dx=1\,.
\end{equation}
Given a measure $\mu$ with $C^1$ densities, we define a probability current $J[\mu]=\left(J_\mathbf e[\mu]\right)_{\mathbf e\in \mathbf E}$ on the edges of $\mathcal G$ by
\begin{equation}\label{defcurre}
J_\mathbf e[\mu]:=-\frac 12\partial_x\left(\sigma^2_\mathbf e(x)\mu_\mathbf e(x)\right)+b(x)\mu_\mathbf e(x)\,, \qquad x\in (0,\ell_\mathbf e)\,.
\end{equation}

\begin{lemma}\label{dot}
Consider a diffusion process on a metric graph $\mathcal{G}$ having generator $\mathcal A$ as in Definition \ref{def}. Then the process has a unique invariant measure $\mu$ of the form \eqref{cms} characterized by the following properties:
\begin{enumerate}\label{sipuo}
	\item The current $J_\mathbf e[\mu]$ is constant on each edge $\mathbf e\in \mathbf E$.
	
	\item On each node $v\in V$ we have the divergence free condition
	$$
	\div J[\mu](v):=\sum_{\mathbf e\in A^+(x) } J_\mathbf e[\mu]-\sum_{\mathbf e\in A^-(x) } J_\mathbf e[\mu]=0\,.
	$$
	
	\item There exist some constants $\lambda_v>0$ such that
	\begin{equation}\label{arrivanoirussi}
	\left\{
	\begin{array}{ll}
	\frac 12\sigma^2_\mathbf e(v)\mu_{\mathbf e}(v)=\lambda_v\alpha_{v,\mathbf e} & v\in V, \mathbf e\in A(v)\,,\\
	\mu_v=\lambda_v\alpha_v & v\in V\,.
	\end{array}
	\right.
	\end{equation}
\end{enumerate}
Furthermore the process is reversible if and only if conditions $(1)+(2)$ are substituted by the single condition
\begin{enumerate}
\item[(1')] $J_\mathbf e[\mu]=0$ on each edge $\mathbf e\in \mathbf E$\,.
\end{enumerate}
\end{lemma}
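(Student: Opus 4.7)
The plan is to characterize $\mu$ via the weak stationarity identity $\int_{\mc G}\mc A f\,d\mu=0$ for every $f\in\mc D(\mc A)$, and to turn that single functional equation into the three pointwise conditions by independently varying the interior data, the vertex values $f(v)$, and the exiting derivatives $D_{\mathbf e}f(v)$. First I would write
\begin{equation}
\int_{\mc G}\mc A f\,d\mu=\sum_{v\in V}\mu_v[\mc A f](v)+\sum_{\mathbf e\in\mathbf E}\int_0^{\ell_{\mathbf e}}L_{\mathbf e}f_{\mathbf e}\,\mu_{\mathbf e}\,dx,
\end{equation}
and perform two integrations by parts on each edge, producing $L^*_{\mathbf e}\mu_{\mathbf e}=-\partial_xJ_{\mathbf e}[\mu]$ in the bulk plus the boundary term $\bigl[\tfrac12\sigma^2_{\mathbf e}\mu_{\mathbf e}f'_{\mathbf e}+f_{\mathbf e}J_{\mathbf e}[\mu]\bigr]_0^{\ell_{\mathbf e}}$. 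Rewriting $f'_{\mathbf e}$ at the endpoints in terms of $D_{\mathbf e}f(v)$, with a sign depending on whether $\mathbf e\in A^\pm(v)$, and using \eqref{nodecond} to eliminate $\mu_v[\mc A f](v)$ in favour of the $D_{\mathbf e}f(v)$'s, the whole integral collapses to
\begin{equation}
-\sum_{\mathbf e}\int_0^{\ell_{\mathbf e}}f_{\mathbf e}\,\partial_xJ_{\mathbf e}[\mu]\,dx-\sum_{v}f(v)\,\div J[\mu](v)+\sum_{v}\sum_{\mathbf e\in A(v)}\Bigl(\tfrac{\mu_v\alpha_{v,\mathbf e}}{\alpha_v}-\tfrac12\sigma^2_{\mathbf e}(v)\mu_{\mathbf e}(v)\Bigr)D_{\mathbf e}f(v).
\end{equation}

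Demanding this to vanish for all admissible $f$ yields (1) from the bulk integrand, (2) from the coefficient of $f(v)$, and (3) with $\lambda_v:=\mu_v/\alpha_v$ from the coefficient of $D_{\mathbf e}f(v)$. The degenerate case $\alpha_v=0$ needs a separate argument: the node relation now forces $(D_{\mathbf e}f(v))_{\mathbf e}$ to lie on the hyperplane $\sum_{\mathbf e}\alpha_{v,\mathbf e}D_{\mathbf e}f(v)=0$, so vanishing of the derivative coefficient on that hyperplane forces $\tfrac12\sigma^2_{\mathbf e}(v)\mu_{\mathbf e}(v)=\lambda_v\alpha_{v,\mathbf e}$ for a common $\lambda_v$, with $\mu_v=0=\lambda_v\alpha_v$ automatic. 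Positivity of $\lambda_v$ comes from $\sigma_{\mathbf e}>0$ together with strict positivity of $\mu_{\mathbf e}$ on the incident edges, itself a consequence of irreducibility; and uniqueness of $\mu$ follows from the same irreducibility, guaranteed by connectedness of $\mc G$ and the standing assumption $\alpha_{v,\mathbf e}>0$, via standard Feller-process arguments.

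For the reversibility clause I would feed the same integration-by-parts machinery into the antisymmetric bilinear form $\int(f\mc A g-g\mc A f)\,d\mu$: each edge contributes a boundary piece plus $\int_0^{\ell_{\mathbf e}}J_{\mathbf e}[\mu](f_{\mathbf e}g'_{\mathbf e}-g_{\mathbf e}f'_{\mathbf e})\,dx$, and the vertex sum $\sum_v\mu_v(f(v)[\mc A g](v)-g(v)[\mc A f](v))$ together with the edge boundary pieces cancels identically once (3) is invoked: the coefficient of the antisymmetric combination $f(v)D_{\mathbf e}g(v)-g(v)D_{\mathbf e}f(v)$ coming from the edge boundary is $-\tfrac12\sigma^2_{\mathbf e}(v)\mu_{\mathbf e}(v)=-\lambda_v\alpha_{v,\mathbf e}$, which regroups via \eqref{nodecond} into $-\mu_v(f(v)[\mc A g](v)-g(v)[\mc A f](v))$. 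Since $J_{\mathbf e}[\mu]$ is already constant on each edge by (1), what remains is $\sum_{\mathbf e}J_{\mathbf e}[\mu]\int_0^{\ell_{\mathbf e}}(f_{\mathbf e}g'_{\mathbf e}-g_{\mathbf e}f'_{\mathbf e})\,dx$, which vanishes for all admissible $f,g$ exactly when each $J_{\mathbf e}[\mu]=0$, i.e.\ (1') holds (then (2) is automatic). I expect the main obstacle to be the density step, namely showing that the triples $\bigl(f_{\mathbf e},f(v),(D_{\mathbf e}f(v))_{\mathbf e\in A(v)}\bigr)$ can be prescribed freely enough inside $\mc D(\mc A)$ to disentangle the three contributions. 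I would handle this by explicit local constructions of test functions supported near a single vertex or in the interior of a single edge, tuning the second derivatives so that $\mc A f\in C(\mc G)$ at $v$ is compatible with \eqref{nodecond}; the $C^1$ regularity of $b_{\mathbf e}$, the $C^2$ regularity of $\sigma_{\mathbf e}$, and the strict positivity of $\sigma_{\mathbf e}$ make this construction routine.
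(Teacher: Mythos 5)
Your proposal follows essentially the same route as the paper: characterize $\mu$ by $\int_{\mathcal G}[\mathcal A f]\,d\mu=0$, integrate by parts twice on each edge, and split the result into a bulk term (giving condition (1)), a coefficient of $f(v)$ (giving (2)), and a coefficient of $D_{\mathbf e}f(v)$ regrouped via the node condition \eqref{nodecond} (giving (3)); the reversibility clause is likewise handled via the antisymmetrized form $\int([\mathcal A f]h-f[\mathcal A h])\,d\mu$. Your extra care about the degenerate case $\alpha_v=0$ and the density of the test data $(f_{\mathbf e},f(v),D_{\mathbf e}f(v))$ fills in details the paper leaves implicit, but the argument is the same.
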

\begin{proof}
Uniqueness follows by \cite{Boga}, \cite{Risken}. The fact that the unique invariant measure is of the form \eqref{cms} follows by Theorem \ref{FW}. We show now that a probability measure $\mu$ satisfies the three conditions above if and only if
\begin{equation}\label{egia}
\int_{\mathcal{G}} \left[\mathcal Af\right] d\mu=0\,, \qquad \forall f\in \mathcal D(\mathcal A)\,.
\end{equation}

Consider indeed a generic $f\in \mathcal D(\mathcal A)$ and perform a double integration by parts on each edge. We obtain the following
\begin{align}
&\int_{\mathcal{G}} \left[\mathcal Af\right] d\mu= \sum_{v\in V}\left[ \mu_v[\mathcal A f](v)-\sum_{\mathbf e\in A(v)}
\frac12\sigma^2_\mathbf e(v)\mu_\mathbf e(v)D_\mathbf ef(v)\right]\\
&+\sum_{v\in V}f(v)\div J[\mu](v)-\sum_{\mathbf e\in \mathbf E}\int_0^{\ell_{\mathbf e}}f_\mathbf e(x)\partial_x J_\mathbf e(\mu)dx\,.\nonumber
\end{align}
Condition (1) implies that each term  of the third sum on the right hand side of the above equation is zero. Condition (2) implies the same for the second sum and condition (3)  for the first one. We have therefore that if the conditions are satisfied then \eqref{egia} holds. Conversely since the right hand side of the above equation has to be zero for each function $f\in \mathcal D(\mathcal A)$ then
each single term has to be identically zero and this implies the validity of the three conditions.

\smallskip

The reversibility is proved observing that, for all $f,h\in \mathcal D(\mathcal A)$, we get
\begin{align*}
\int_{\mathcal{G}} \left[\mathcal Af\right]h d\mu- \int_{\mathcal{G}} f\left[\mathcal Ah\right] d\mu =&\sum_{v\in V}\left[ \mu_v[\mathcal A f](v)-\sum_{\mathbf e\in A(v)}
\frac12\sigma^2_\mathbf e(v)\mu_\mathbf e(v)D_\mathbf ef(v)\right]g(v)\\
+ &\sum_{v\in V}\left[ \mu_v[\mathcal A g](v)-\sum_{\mathbf e\in A(v)}
\frac12\sigma^2_\mathbf e(v)\mu_\mathbf e(v)D_\mathbf eg(v)\right]f(v)\\
+ &\sum_{\mathbf e\in \mathbf E}\int_0^{\ell_{\mathbf e}}\big( \partial_x f_{\mathbf{e} }(x)h_\mathbf e(x) - f_\mathbf e(x)\partial_x h_{\mathbf{e}(x) }\big) J_\mathbf e(\mu)dx.
\end{align*}
The first two terms in the right hand side of the above formula are zero by (3).  The third term is zero by condition $(1')$. The converse statement is proved observing that the above equation has to be zero for any pair of functions $f,h\in \mathcal D(\mathcal A)$.
\end{proof}
\begin{remark}
Condition $(3)$ on Lemma \ref{dot} can be written as
\begin{equation}\label{c1}
\frac{\sigma^2_\mathbf e(v)\mu_\mathbf e(v)}{2\mu_v}=\frac{\alpha_{v,\mathbf e}}{\alpha_v}\,, \qquad v\in V, \mathbf e\in A(v)\,,
\end{equation}
for the vertices for which $\alpha_v>0$ and as
\begin{equation}\label{c2}
\frac{\sigma^2_\mathbf e(v)\mu_\mathbf e(v)}{\sigma^2_{\mathbf e'}(v)\mu_{\mathbf e'}(v)}=\frac{\alpha_{v,\mathbf e}}{\alpha_{v,\mathbf e'}}\,, \qquad \mathbf e, \mathbf e'\in A(v)\,,
\end{equation}
for the vertices for which $\alpha_v=0$. Note that for a vertex $v$ with $\alpha_v>0$ we have that condition \eqref{c1} implies condition \eqref{c2}.
\end{remark}
In the next section we construct explicitly a probability measure satisfying the conditions of Lemma \ref{dot} obtaining therefore the unique invariant measure of the process.

\section{A combinatorial representation of the invariant measure}
\label{crim}

We consider metric arborescences rooted at $x\in \mathcal G$ that we recall they are  oriented metric trees obtained cutting the original metric graph on a finite number of points and orienting all the edges toward the root (see Figure \ref{arbor} for an example). Let us call $\mathcal T_x$ the collection of all the metric arborescences rooted at $x\in \mathbf e\in \mathbf E$.

\smallskip

Given $\tau\in \mathcal T_x$ we define the corresponding weight as
\begin{equation}\label{pesogcal}
R(\tau):=\frac{e^{\int_{\tau}s}}{\sigma_{\mathbf{e}(x)}^2(x)}\prod_{v\in V}\mathcal W_v(\tau)\,.
\end{equation}
The symbol $\int_\tau s$ denotes the sum of all the integrals along the edges of the metric graph according to the orientations of $\tau$. In particular if the edge $\mathbf e$ does not contain a cut and it is oriented in $\tau$ according to its natural orientation then the contribution of the edge is given by $\int_0^{\ell_\mathbf e}s_\mathbf e(x)dx$. If instead the edge is oriented in $\tau$ oppositely with respect to the natural orientation then the contribution is
$-\int_0^{\ell_\mathbf e}s_\mathbf e(x)dx$. If the edge $\mathbf e$ contains a cut in the point $z\in (0,\ell_\mathbf e)$ then the contribution coming from this edge is given by $-\int_0^{z}s_\mathbf e(x)dx+\int_z^{\ell_\mathbf e}s_\mathbf e(x)dx$ since the orientation of the two branches of the edge are necessarily exiting from the cut-point. Recall that as before we use the notation $s_\mathbf e:=\frac{b_\mathbf e}{\sigma^2_\mathbf e}$.

The weight $\mathcal W_v(\tau)$ for the vertex $v\in V$ and the arborescence $\tau$ is defined by
\begin{equation}\label{cong}
\mathcal W_v(\tau):=\prod_{\mathbf e \in A(v) }W_v^{\theta_v(\mathbf e,\tau)}(\mathbf e)\,.
\end{equation}
In \eqref{cong} we have $\theta_v(\mathbf e,\tau)=\pm$ depending if $\mathbf e$ is oriented in the arborescence $\tau$ in a neighborhood of $v$ exiting or entering respectively into $v$. For any pair $v\in V$ and $\mathbf e \in A(v)$ we have therefore two free parameters $W^\pm_v(\mathbf e)>0$.

We fix $W^-_v(\mathbf e):=1$ for any edge $\mathbf e$, since this corresponds to multiplying the weights $R$ by a constant.

\smallskip

The number of cuts that have to be done in order to transform a metric graph $(V,\mathbf E)$ into a metric tree is fixed and it is given by $|\mathbf E|-|V|+1$. We call this number the \emph{dimension of the cut space}.
In order to obtain a metric tree the cuts have to be done on different edges, more precisely the exact procedure is the following. Associate to the metric graph $(V, \mathbf E)$ the un-oriented graph $(V, \mathcal E)$ as discussed in Section \ref{MTG}.  According to our definitions, loops and multiple edges are allowed. By construction we have $|\mathbf E|=|\mathcal E|$.
For any spanning tree $(V,T)$ of $(V,\mathcal E)$ we have that performing one cut
for any edge in $\mathbf E$ corresponding to an unoriented edge in $\mathcal E\setminus T$
we obtain a metric tree.

\smallskip

Let $\mathcal C$ the collection of cutting points that transform the metric graph $(V,\mathbf E)$ into a metric tree. When the dimension of the cut space is $k=|\mathcal E\setminus T|$ then a generic element of $\mathcal C$ is given by $\underline y=(y_1,\dots y_k)$ where each $y_i$ belongs to a different element of $\mathbf E$ associated to an edge in $\mathcal E\setminus T$. We call $\mathbb T$ the collection of the spanning trees of $(V,\mathcal E)$ and $\mathcal C(T)$ the cutting points compatible with the spanning tree $T\in \mathbb T$ as discussed above. We have therefore $\mathcal C=\cup_{T\in \mathbb T}\mathcal C(T)$. Given $\underline y=(y_1,\dots ,y_k)\in \mathcal C(T)$ we call $\tau_x[\underline y]$ the metric arborescence obtained cutting the metric graph on the points $(y_1, \dots ,y_k)$ and orienting the edges toward $x$.

Let us define a positive measure $m=\left(m_\mathbf e(x)dx\right)_{\mathbf e\in \mathbf E}$  on $\mathcal G$ that gives zero weight to vertices and that is absolutely continuous on the edges, defined by
\begin{equation}\label{laformula}
m_\mathbf e(x):=\sum_{T\in \mathbb T}\int_{\mathcal C(T)}dy_1\dots dy_k R\left(\tau_x\left[\underline y\right]\right)\,, \qquad x\in \mathbf e\,.
\end{equation}

We have the following
\begin{theorem}\label{ilteo}
The positive measure $m$ defined by \eqref{laformula} satisfies conditions (1), (2) and the equations in the upper line of condition (3) in Lemma \ref{sipuo} under the condition
\begin{equation}\label{cambiata}
W^+_v(\mathbf e)=K_v\alpha_{v,\mathbf e}\,,\qquad v\in V, \mathbf e\in A(v)\,,
\end{equation}
where $K_v>0$ is a family of arbitrary constants.
\end{theorem}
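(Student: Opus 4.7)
The plan is to verify in turn the three assertions of the theorem, namely (1), (2) and the upper line of (3) of Lemma \ref{sipuo}, following the same spirit as the one-dimensional computation in Lemma \ref{lemma31}. The guiding principle is that for fixed $T$ and $\underline{y}$ each density $R(\tau_x[\underline{y}])$, seen as a function of the root $x\in\mathbf{e}$, gives $J_\mathbf{e}[R(\tau_\cdot[\underline{y}])](x)=0$ wherever it is smooth; all of $J_\mathbf{e}[m]$ is therefore a boundary contribution produced by discontinuities that arise only when a cut position meets the root, and the divergence at a vertex is a combinatorial reshuffling among the edges incident to $v$.

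For condition (1), I would start by observing that, for $x$ in the interior of $\mathbf{e}\in\mathbf{E}$, the only $x$-dependence of the quantity $\sigma_\mathbf{e}^2(x)R(\tau_x[\underline{y}])=e^{\int_\tau s}\prod_{v'}\mathcal{W}_{v'}(\tau_x[\underline{y}])$ comes from the two branches of $\mathbf{e}$ oriented toward $x$; their joint contribution to $\int_\tau s$ differentiates to $2 s_\mathbf{e}(x)=2 b_\mathbf{e}(x)/\sigma_\mathbf{e}^2(x)$, giving $\partial_x[\sigma_\mathbf{e}^2(x)R(\tau_x[\underline{y}])]=2 b_\mathbf{e}(x)R(\tau_x[\underline{y}])$ and hence $J_\mathbf{e}[R(\tau_\cdot[\underline{y}])](x)=0$ pointwise. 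When the $\mathcal{E}$-edge corresponding to $\mathbf{e}$ lies in $T$ this is all that is needed; when it does not, one cut coordinate $y_1$ ranges in $\mathbf{e}$ and the integrand $h(x,y_1):=R(\tau_x[(y_1,\underline{y}')])$ has a jump at $y_1=x$, since crossing the root with the cut flips the orientation of $T$ and of the detached piece of $\mathbf{e}$. Leibniz with a discontinuous integrand then yields
\begin{equation*}
J_\mathbf{e}[m](x)=-\frac{1}{2}\sum_{T:\,\mathbf{e}\notin T}\int d\underline{y}'\,\sigma_\mathbf{e}^2(x)\bigl[h(x,x^-)-h(x,x^+)\bigr],
\end{equation*}
and in each one-sided limit $y_1\to x^\mp$ the total $\mathbf{e}$-contribution to $\int_\tau s$ collapses to $\mp\int_0^{\ell_\mathbf{e}}s_\mathbf{e}$, while every other factor depends only on $T$ and $\underline{y}'$; the bracket is thus $x$-independent.

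For the upper line of (3), I would let $x\to v$ along $\mathbf{e}\in A(v)$ in the identity $\sigma_\mathbf{e}^2(x)m_\mathbf{e}(x)=\sum_T\int_{\mathcal{C}(T)}e^{\int_\tau s}\prod_{v'}\mathcal{W}_{v'}(\tau_x[\underline{y}])\,d\underline{y}$. Near $v$ the edge $\mathbf{e}$ is oriented exiting $v$ while every other $\mathbf{e}'\in A(v)$ enters $v$, so using $W_v^-\equiv 1$ and the ansatz \eqref{cambiata} one has $\mathcal{W}_v(\tau_x[\underline{y}])\to K_v\alpha_{v,\mathbf{e}}$. What remains is, for fixed $(T,\underline{y})$, exactly the weight of the arborescence $\tau_v^{T,\underline{y}}$ obtained by rooting at $v$ the metric tree produced by $T$ and $\underline{y}$, and this is manifestly $\mathbf{e}$-independent. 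One thereby obtains $\tfrac12 \sigma_\mathbf{e}^2(v)m_\mathbf{e}(v)=\lambda_v\alpha_{v,\mathbf{e}}$ with
\begin{equation*}
\lambda_v:=\frac{K_v}{2}\sum_T\int_{\mathcal{C}(T)}e^{\int_{\tau_v^{T,\underline{y}}}s}\prod_{v'\neq v}\mathcal{W}_{v'}(\tau_v^{T,\underline{y}})\,d\underline{y}>0.
\end{equation*}

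For condition (2), I would compute $\sum_{\mathbf{e}\in A^+(v)}J_\mathbf{e}[m]-\sum_{\mathbf{e}\in A^-(v)}J_\mathbf{e}[m]$ using the boundary-term formula just obtained. Each one-sided limit $\sigma_\mathbf{e}^2(x)h(x,x^\mp)$ may be read as the weight of an arborescence of the reduced graph $\mathcal{G}\setminus\mathbf{e}$, augmented by a pendant piece coming from the truncated side of $\mathbf{e}$, rooted at one of the two endpoints of $\mathbf{e}$. The cancellation at $v$ should then follow from an involution on these configurations that exchanges the role of the cut edge across $v$, in the spirit of the bijective proof of Kirchhoff's law in the discrete matrix tree theorem; the ansatz \eqref{cambiata} is precisely what is needed to match the vertex factors $\mathcal{W}_{v'}$ between paired configurations. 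I expect this combinatorial bookkeeping, rather than the analytic computations behind (1) and (3), to be the technical core of the argument.
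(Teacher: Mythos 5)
Your treatment of condition (1) and of the upper line of condition (3) is essentially the paper's argument: the pointwise identity $\partial_x[\sigma^2_{\mathbf e}(x)R(\tau_x[\underline y])]=2b_{\mathbf e}(x)R(\tau_x[\underline y])$ away from the cuts, the Leibniz boundary term $h(x,x^-)-h(x,x^+)$ when a cut coordinate crosses the root (this is exactly \eqref{daspiegare}), and the local limit of $\mathcal W_v$ giving the factor $K_v\alpha_{v,\mathbf e}$ via \eqref{cambiata}. Two small points you gloss over: in the bracket $h(x,x^-)-h(x,x^+)$ the ``other factors'' are \emph{not} the same in the two one-sided limits (the orientation of the whole cycle flips, which is why the paper introduces $\tau'_{x\pm}$), they are merely each $x$-independent, which is what you actually need; and in the limit $x\to v$ for condition (3) you must first discard the cut configurations having a cut between $x$ and $v$ on $\mathbf e$ (for those, $\mathbf e$ is locally oriented \emph{entering} $v$, so $\mathcal W_v$ does not converge to $K_v\alpha_{v,\mathbf e}$); the paper handles this by splitting the cut set into $\mathcal C^\epsilon(T)$ and its complement and showing the former is negligible.

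The genuine gap is condition (2), which you explicitly leave as an expectation. The mechanism you propose --- an involution exchanging the role of the cut edge across $v$, with \eqref{cambiata} used to match the vertex factors --- is not the right one: \eqref{cambiata} plays no role in (1) or (2) (it is only needed for (3)), and no matching of the weights $\mathcal W_{v'}$ between paired configurations is required. The idea you are missing is a regrouping, not a pairing across $v$: collect the constant boundary contributions to $J_{\mathbf e}[m]$ according to the uni-cyclic oriented spanning subgraph they come from (the object $\mathcal L(\tilde y,\theta)$ obtained from $\tau_{x\pm}[\underline y]$ by removing the cut at $x$, so that $\mathbf e$ lies on the unique cycle). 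A fixed uni-cyclic subgraph, with its two cycle orientations, contributes one and the same constant (with a sign given by the orientation) to the current of \emph{every} edge of its cycle and nothing to the other edges; such a constant circulation around a cycle has zero divergence at every vertex, since at each cycle vertex exactly one cycle edge carries the current in and one carries it out. Summing over $L\in\mathbb L$ and integrating over $\tilde{\mathcal C}(L)$ preserves the divergence-free property, which is the whole proof of (2). Your description of the one-sided limits as arborescences of $\mathcal G\setminus\mathbf e$ with a pendant also obscures this: the natural limiting object keeps $\mathbf e$ inside a cycle, and it is the cycle structure, not a bijection between trees, that makes the divergence vanish.
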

\begin{proof}
The basic fact to prove this Theorem is the computation of
\begin{equation}\label{calcolobase}
\partial_x\left(\int_{\mathcal C(T)}dy_1\dots dy_k e^{\int_{\tau_x[\underline y]}s}\right)\,.
\end{equation}
We do it distinguishing several cases.
The first case is when $x\in \mathbf e$ and $\mathbf e\cap \mathcal C(T)=\emptyset$. In this case we can differentiate directly and using the computations in Lemma \ref{lemma31} we get that
\eqref{calcolobase} is equal to
\begin{equation}\label{dacit}
\frac{2 b_{\mathbf e}(x)}{\sigma^2_{\mathbf e}(x)}\int_{\mathcal C(T)}dy_1\dots dy_k e^{\int_{\tau_x[\underline y]}s}\,.
\end{equation}
In the case instead that $x\in \mathbf e$ and $\mathbf e\cap \mathcal C(T)\neq \emptyset$ we have to do a different computation. Let us assume (for simplicity of notation and without loss of generality) that $y_k$ is the cutting point that belongs to $\mathbf e$. We can write the term to be differentiate in \eqref{calcolobase} as
\begin{equation}\label{lungai}
\int_{\mathcal C'(T)}dy_1\dots dy_{k-1} e^{\int_{\tau'_x[\underline y]}s}\left(\int_0^x dy_ke^{\int_{y_k}^0s}
e^{\int_{y_k}^xs}e^{\int_{\ell_{\mathbf e}}^xs}+
\int_x^{\ell_{\mathbf e}} dy_ke^{\int_{0}^xs}
e^{\int_{y_k}^xs}e^{\int^{\ell_{\mathbf e}}_{y_k}s}
\right).
\end{equation}
In the above formula we called $\mathcal C'(T)$ the cutting points that do not belong to $\mathbf e$ and we called $\tau'_x[\underline y]$ the arborescence $\tau_x[\underline y]$ without the oriented edges belonging to $\mathbf e$.
Differentiating \eqref{lungai} with respect to $x$ we have a term like \eqref{dacit} (that comes from the differentiation of the dependence on $x$ in the integrand) with  in addition a contribution (that comes differentiating the dependence on $x$ in the extrema of integration). This additional contribution is given by
\begin{equation}\label{daspiegare}
\int_{\mathcal C'(T)}dy_1\dots dy_{k-1} e^{\int_{\tau'_{x-}[\underline y]}s}e^{\int^{0}_xs}e^{\int_{\ell_{\mathbf e}}^xs}- \int_{\mathcal C'(T)}dy_1\dots dy_{k-1} e^{\int_{\tau'_{x+}[\underline y]}s}e^{\int^{\ell_{\mathbf e}}_xs}e^{\int_{0}^xs}\,.
\end{equation}
A few explanations are in order for formula \eqref{daspiegare}. Here there are 2 contributions, one is positive and the other one is negative. These contributions can be naturally interpreted as weights coming from arborescences that have a cut in the edge $\mathbf e $ that coincides exactly with the point $x$. The positive term corresponds to the weight of the arborescence when both parts of $\mathbf e$ are oriented in the opposite way with respect to the canonical one. The term with the minus sign corresponds instead to the situation when both parts of the edge $\mathbf e$ are oriented in agreement with the canonical orientation.
We call $\tau_{x\pm}[\underline y]$ the corresponding two different arborescences (see Figure \ref{taupiuomeno} for an illustrative example). In particular consider a cutting point $\underline y=(y_1,\dots ,x, \dots,y_k)$ having a cut in correspondence of $x\in \mathbf e$. We write $\tau'_{x-}[\underline y]$ to denote the part of the arborescence $\tau_{x-}[\underline y]$ outside of the edge $\mathbf e$ when  both parts of $\mathbf e$ are oriented in the opposite way with respect to the canonical one. We write instead $\tau'_{x+}[\underline y]$ to denote the part of the arborescence $\tau_{x+}[\underline y]$ outside of the edge $\mathbf e$ when both parts of $\mathbf e$ are oriented in agreement with respect to the canonical one.
\begin{figure}
	
	\centering
	
	\includegraphics{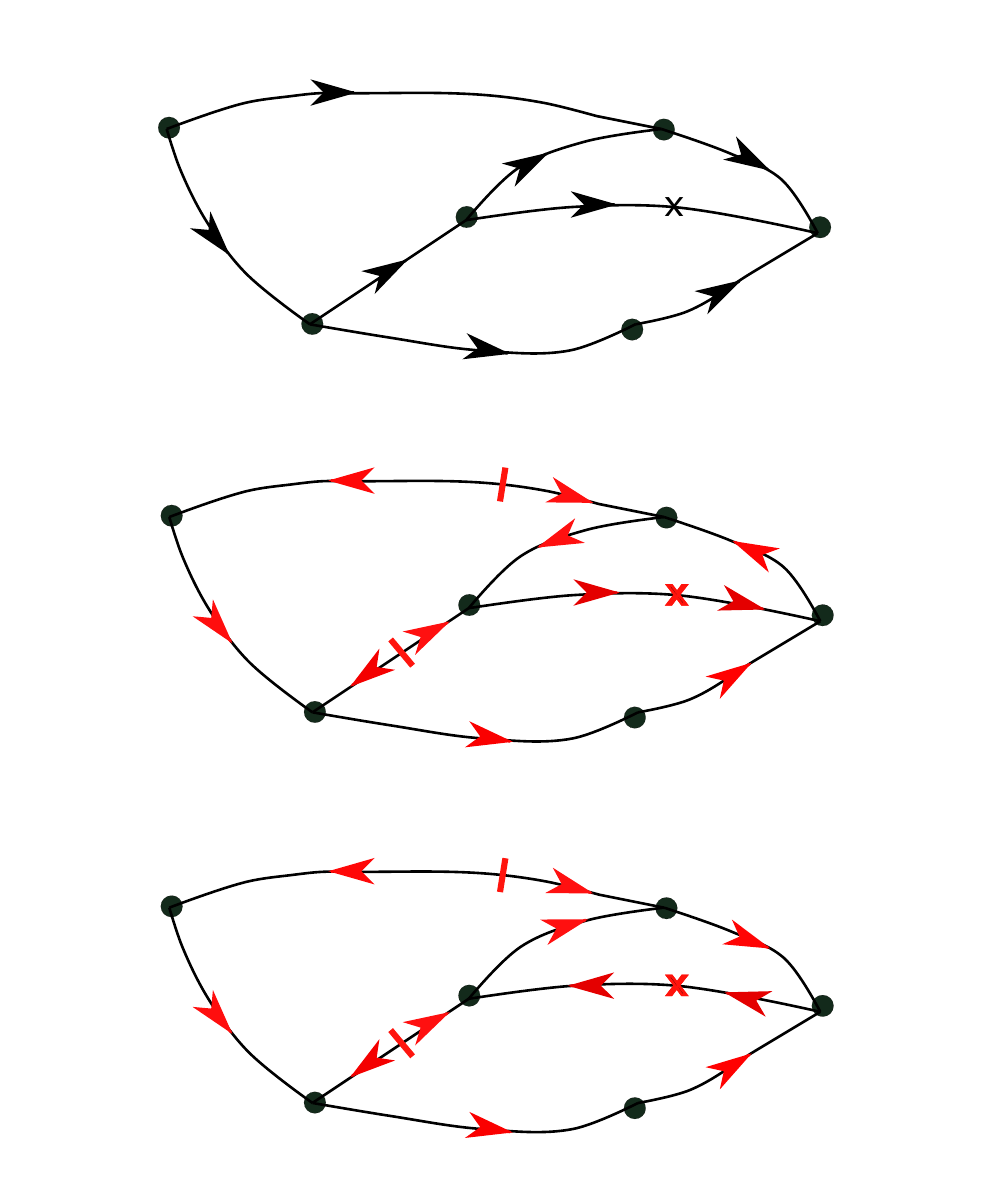}
	
	\caption{A metric graph with a marked point x. The black arrows denote the canonical orientation of the edges (top picture). The oriented arborescence $\tau_{x+}(\underline y)$ where the three cuts $\underline y=(y_1,y_2,x)$ are represented respectively by two cutting red segments and a red x in correspondence of the marked point x. The orientations of the branches of the arborescence are red colored (middle picture). The oriented arborescence $\tau_{x-}(\underline y)$ where $\underline y$ is as before. The orientations of the branches of the arborescence are red colored (bottom picture). }\label{taupiuomeno}
	
\end{figure}
\smallskip

An alternative natural interpretation  of the terms in \eqref{daspiegare} is the following. We call $\mathfrak L$ the set of uni-cyclic oriented metric spanning subgraphs of $(V,\mathbf E)$ defined as follows. We call $\mathbb L$ the connected spanning subgraphs of $(V,\mathcal E)$ that contains one single cycle and we call $\mathbb L_{\mathbf e}\subseteq \mathbb L$
the ones such that the element of $\mathcal E$ corresponding to $\mathbf e$ is one of the edges of the unique cycle. For any $L\in \mathbb L$ we call $\tilde{\mathcal{C}}(L)$ the sets of points $\tilde{y}=(y_1, \dots ,y_{k-1})\in \mathbf{E}^{k-1}$ such that the $y_i$ belong to different metric edges and each of them corresponds to an element of $\mathcal E$ that has been erased from $(V, \mathcal E)$ to get $L$.
We call a uni-cyclic spanning metric subgraph $\mathcal L\in\mathfrak L$ the metric graph obtained from $(V,\mathbb E)$ by the above cuts $(y_1,\dots ,y_{k-1})$, such that all the edges are oriented toward the unique cycle and the edges of the cycle  are oriented in such a way that it is possible to go around respecting the orientation. For each $\mathcal L\in \mathfrak L$ there exists a $\bar{ \mathcal L}\in \mathfrak L$ such that $\mathcal L$ and $\bar {\mathcal L}$ are obtained by the same cuts, the edges outside of the cycle are oriented in the same way and all the edges inside the cycle are oriented in the opposite way. Given $\tilde{y}\in \tilde{\mathcal{C}}(L)$ we call $\mathcal L(\tilde y,\theta)$, with $\theta=\pm$ the two uni-cyclic spanning metric subgraphs with the two possible orientations of the cycle. Consider a metric arborescence $\tau_{x\pm}(\tilde{y},x)$ obtained from $\mathcal G$ by a cutting set of the form $(\tilde{y},x)$. An uniciclyc metric graph $\mathcal L(\tilde{y},\theta)$ is obtained from $\tau_{x\pm}(\tilde{y},x)$ removing the cut at $x$ and in this case $x$ belongs to the unique cycle (see Figure \ref{unicicli}).
\begin{figure}
	
	\centering
	
	\includegraphics{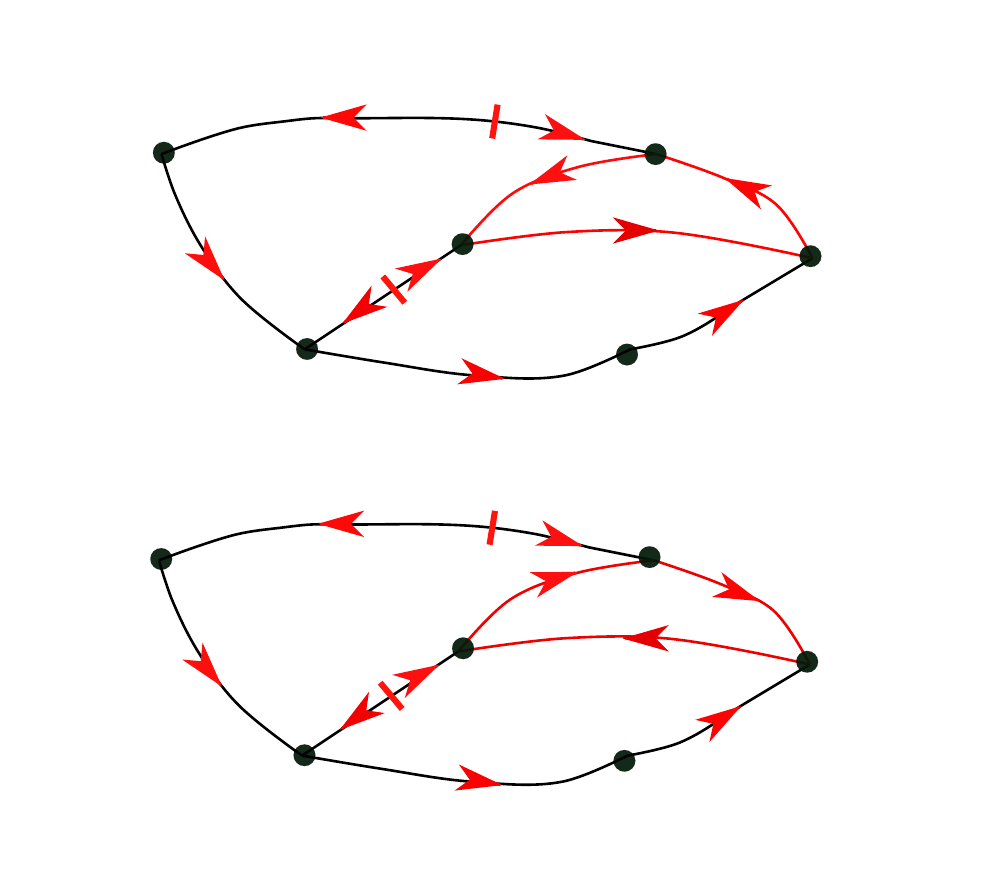}
	
	\caption{Two uni-cyclic spanning metric oriented subgraphs $\mathcal L$ and $\upbar{ \mathcal L}$ of the metric graph in the top picture of Figure \ref{taupiuomeno}. The orientations are red colored, the unique cycle is also red colored and oppositely oriented in the two pictures. Note that the two pictures are obtained by the middle and bottom pictures of Figure \ref{taupiuomeno} simply removing the x cut.}\label{unicicli}
	
\end{figure}
\smallskip

We can now compute $\partial_x\left(\sigma^2_\mathbf e(x)m_\mathbf e(x)\right)$ and obtain that this derivative is given by 3 different contributions. The first one is given by
\begin{equation}\label{primo}
2 b_{\mathbf e}(x)m_\mathbf e(x)\,,
\end{equation}
that is obtained summing terms of the form \eqref{dacit} that appear in the derivatives both when $\mathbf e\cap \mathcal C(T)=\emptyset$ as well as when
$\mathbf e\cap \mathcal C(T)\neq\emptyset$.

The remaining contributions are one positive and one negative and they arise from terms of the type \eqref{daspiegare}. The positive one is given by
\begin{equation}\label{second}
\sum_{L\in \mathbb L{\mathbf e}}\int_{\tilde{\mathcal{C}}(L)}dy_1,\dots dy_{k-1}e^{\int_{\mathcal L(\tilde{y}, \theta^*)}s}\prod_{v\in V}\mathcal W_v\big(\mathcal L(\tilde{y}, \theta^*)\big)\,.
\end{equation}
In the above formula $\theta^*$ denotes the orientation of the unique cycle
opposite to the natural one of $\mathbf e$ and the $\mathcal W_{.}$ terms give the weights to the nodes depending on the local orientation of the edges and they are defined like in \eqref{cong}. The negative term
is given by
\begin{equation}\label{terzo}
- \sum_{L\in \mathbb L{\mathbf e}}\int_{\tilde{\mathcal{C}}(L)}dy_1,\dots dy_{k-1}e^{\int_{\mathcal L(\tilde{y}, -\theta^*)}s}\prod_{v\in V}\mathcal W_v(\mathcal L(\tilde{y}, -\theta^*))\,,
\end{equation}
where $-\theta^*$ corresponds to the opposite orientation of the cycle with respect to $\theta^*$ and agrees therefore with the natural one of $\mathbf e$.

We can now prove the properties of the measure $m$.

\smallskip

\noindent $(1)$: Recall that the current probability along each edge is given by
\begin{equation}\label{tempera}
J_{\mathbf e}[m]:=-\frac 12\partial_x\left(\sigma_{\mathbf e}^2(x)m_\mathbf e(x)\right)+b_{\mathbf e}(x)m_\mathbf e(x)\,.
\end{equation}
If we insert the three terms obtained by the computation of $\partial_x\left(\sigma^2_\mathbf e(x)m_\mathbf e(x)\right)$ we obtain the following. The term \eqref{primo} matches exactly the last term in \eqref{tempera} so that the probability current
is exactly equal to the sum of the two terms in \eqref{second} and \eqref{terzo}. Since these terms do not depend on the specific point $x\in \mathbf e$ we have that the current \eqref{tempera} does not depend on $x$ too.

\smallskip

\noindent $(2)$: The divergence free condition is obtained by the following argument. We have that the non zero contributions to the current $J_{.}[m]$ on the metric graph are coming from the constant  (along each edge) terms \eqref{second} and \eqref{terzo}.
Given an $L\in \mathbb L$ we have a contribution to $J_{.}[m]$ that is given by a constant to be added to all the edges $\mathbf e$ that are in correspondence with edges of the unique cycle in $L$. This is a divergence free contribution since on each node the current entering is equal to the current exiting. Since a finite superposition of divergence free currents is divergence free we deduce the statement.

\smallskip

\noindent $(3)$ upper equations: In the computation of $m_{\mathbf{e}}(v)$ we can ignore, when considering the limit $y \to v$ in \eqref{vieris}, the contribution from integrations coming from cuts in a neighborhood of the node $v$.  More precisely consider $\mathbf e, \mathbf e'\in A(v)$ with $v\in V$ and $z\in \mathbf e$, $z'\in \mathbf e'$ such that $z,z'$ are at distance (along the edges of the metric graph) $\epsilon$ from $v$. We can write
\begin{equation}\label{verbali}
m_\mathbf e(z)=\left[\sum_{T\in \mathbb T}\int_{\mathcal {\upbar C}^\epsilon(T)}dy_1\dots dy_k R\left(\tau_z\left[\underline y\right]\right)+
\sum_{T\in \mathbb T}\int_{\mathcal {C}^\epsilon(T)}dy_1\dots dy_k R\left(\tau_z\left[\underline y\right]\right)\right],
\end{equation}
where $\mathcal{C}^\epsilon(T)$ is the collection of cutting points such that there is at least one cut at distance less or equal to $\epsilon$ from $v$ while $\mathcal{\upbar C}^\epsilon(T)$ is the complementary set, i.e. the collection of cutting points that are all at distance greater that $\epsilon$ from $v$. A formula similar to \eqref{verbali} can be written also for the point $z'$. In the limit $\epsilon\to 0$ the second term in \eqref{verbali} is negligible.

The basic observation is the following. Recall that by definition any metric arborescence has all the edges apart one oriented entering into any vertex. Consider a tree $T$ and some cutting points $\underline y\in \mathcal{\upbar C}^\epsilon(T)$. In this case all the edges $\mathbf e''\in A(v)$ such that $\mathbf e''\neq \mathbf e$ will be oriented locally around $v$ in $\tau_z[\underline y]$ entering into $v$ while instead $\mathbf e$ will be oriented exiting from $v$. In $\tau_{z'}[\underline y]$ we have instead that $\mathbf e'$ is oriented exiting from $v$ while instead all the other edges in $A(v)$ will be oriented entering into $v$. In particular in  $\tau_{z'}[\underline y]$ and $\tau_z[\underline y]$ all the branches of the two arborescenses will have the same orientation apart the two segments $(v,z)\subseteq \mathbf e$ and $(v,z')\subseteq \mathbf e'$ that are of size $\epsilon$.

We have therefore that for any $T$ and for any $\underline y\in \mathcal{\upbar C}^\epsilon(T)$
\begin{equation}\label{enfin}
\frac{R(\tau_z(\underline y))}{R(\tau_{z'}(\underline y))}=e^{2\int_x^zs}e^{2\int_{z'}^xs}\frac{W^+_x(\mathbf e)\sigma^2_{\mathbf e'}(z')}{W^+_x(\mathbf e')\sigma^2_{\mathbf e}(z)}\,.
\end{equation}
Taking the limit $\epsilon \to 0$ in \eqref{enfin} and using \eqref{cambiata} we finish the proof.
\end{proof}

Using the above result we can finally conclude. Let us call
\begin{equation}\label{insic}
\tilde\lambda_v:=\lim_{y\in \mathbf e, y\to v}\frac{\sigma^2_\mathbf e(y)m_\mathbf e(y)}{\alpha_{v,\mathbf e}}\,\qquad v\in V,\mathbf e\in A(v)\,.
\end{equation}
We observe by Theorem \ref{ilteo}  that $\tilde\lambda_v$ does not depend on the edge $\mathbf e\in A(v)$. Let us also introduce the probability measure $\mu$ as in \eqref{cms}
defined by
\begin{equation}\label{lamisura}
\left\{
\begin{array}{ll}
\mu_v:=\frac{\tilde\lambda_v\alpha_v}{Z} & v\in V\\
\mu_\mathbf e(x):=\frac{m_\mathbf e(x)}{Z} & x\in \mathbf e
\end{array}
\right.
\end{equation}
where $Z$ is the normalization constant determined by \eqref{norm}.
\begin{theorem}
The probability measure \eqref{lamisura} is the invariant measure of the process with generator satisfying \eqref{def}, provided that \eqref{cambiata} holds.
\end{theorem}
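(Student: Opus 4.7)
The plan is to apply the characterization of the invariant measure given in Lemma \ref{dot} to the probability measure $\mu$ defined in \eqref{lamisura}. By that lemma it suffices to verify the three conditions: constancy of the current $J_\mathbf e[\mu]$ along each edge, divergence-freeness at every vertex, and the pair of compatibility relations between the boundary values of the edge densities, the atomic masses, and the parameters $\alpha_v$, $\alpha_{v,\mathbf e}$.

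First, I observe that Theorem \ref{ilteo} (under the hypothesis \eqref{cambiata}) already guarantees that the positive measure $m$ on the edges satisfies conditions $(1)$, $(2)$ and the upper line of $(3)$; in particular the constants $\tilde\lambda_v$ in \eqref{insic} are well-defined, strictly positive, and independent of the choice of $\mathbf e \in A(v)$. Since $V$ and $\mathbf E$ are finite and each $m_\mathbf e$ is a bounded continuous function on the bounded interval $(0,\ell_\mathbf e)$, the normalization
$$Z = \sum_{v\in V}\tilde\lambda_v\alpha_v + \sum_{\mathbf e\in \mathbf E}\int_0^{\ell_\mathbf e} m_\mathbf e(x)\,dx$$
is finite and strictly positive, so $\mu$ in \eqref{lamisura} is a bona fide probability measure.

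Next, set $\lambda_v := \tilde\lambda_v/Z > 0$. Conditions $(1)$ and $(2)$ of Lemma \ref{dot} involve only the edge densities through the current \eqref{defcurre}, which is linear in the density, so they transfer immediately from $m$ to $\mu_\mathbf e = m_\mathbf e/Z$. For condition $(3)$, the lower equation $\mu_v = \lambda_v\alpha_v$ holds by the very definition of $\mu_v$ in \eqref{lamisura}, and the upper equation
$$\tfrac{1}{2}\sigma^2_\mathbf e(v)\mu_\mathbf e(v) = \tfrac{1}{2Z}\sigma^2_\mathbf e(v)m_\mathbf e(v) = \frac{\tilde\lambda_v}{Z}\alpha_{v,\mathbf e} = \lambda_v\alpha_{v,\mathbf e}$$
follows directly from \eqref{insic} together with Theorem \ref{ilteo}. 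Having verified all three items, the uniqueness statement in Lemma \ref{dot} identifies $\mu$ with the invariant measure of the process.

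Accordingly, the present theorem is essentially a bookkeeping step: the genuine difficulty — arranging that the combinatorial edge densities \eqref{laformula} simultaneously match the boundary conditions at every vertex with a single multiplier $\tilde\lambda_v$ common to all $\mathbf e \in A(v)$ — is precisely what Theorem \ref{ilteo} accomplishes, and it is there that the assumption \eqref{cambiata} plays its role. Once this is granted, the atomic components $\mu_v$ are rigidly prescribed by $\tilde\lambda_v$ and $\alpha_v$, and no further obstruction remains.
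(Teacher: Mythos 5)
Your proposal is correct and follows essentially the same route as the paper: transfer conditions $(1)$ and $(2)$ from $m$ to $\mu_\mathbf e=m_\mathbf e/Z$ by linearity of the current, and obtain condition $(3)$ from the edge-independence of the limit in \eqref{insic} together with the definition \eqref{lamisura}, then invoke the uniqueness in Lemma \ref{dot}. Your explicit check that $Z$ is finite and strictly positive is a small addition the paper leaves implicit, but otherwise the arguments coincide.
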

\begin{proof}
Conditions $(1)$ and $(2)$ in Lemma \ref{dot} depend just on the form of the measure $\mu$ on the edges and they are preserved by a multiplication of the measure by a constant factor. Since by Theorem \ref{ilteo} the measure $m$ satisfies these conditions and since the density of $\mu$ on each edge is obtained multiplying the density of $m$ by the factor $Z^{-1}$ we deduce that $\mu$ satisfies conditions $(1)$ and $(2)$.

Condition $(3)$ on Lemma \ref{dot} is satisfied by the fact that the limit on the right hand side of \eqref{insic} does not depend on $\mathbf e\in A(v)$ and by definition \eqref{lamisura}.

Since conditions $(1)$, $(2)$ and $(3)$ characterize the unique invariant measure we deduce that $\mu$ is the invariant measure.
\end{proof}

This construction of the invariant measure gives as special cases formula \eqref{raptorus} for a one dimensional ring, and formula \eqref{facile} in the case of a single interval since in that case the cut space is empty and there are no integrations to be done.

\smallskip

\subsection{The reversible case:} In the reversible case all the structure simplifies and the invariant measure can be simply computed in terms of an auxiliary finite state Markov chain on the set $V$. We hence start defining the auxiliary Markov chain. Given $v,w\in V$ we define the rate of jump from $v$ to $w$ across the edge $\mathbf e\in A(v)\cap A(w)$ as
\begin{equation}\label{befana}
q(v,w):=\left\{
\begin{array}{ll}
\alpha_{v,\mathbf e}e^{\int_0^{\ell_\mathbf e}s_\mathbf e} & \textrm{if}\ \mathbf e\in A^+(v)\cap A^-(w)\,,\\
\alpha_{v,\mathbf e}e^{-\int_0^{\ell_\mathbf e}s_\mathbf e} & \textrm{if}\ \mathbf e\in A^-(v)\cap A^+(w)\,.
\end{array}
\right.
\end{equation}
For simplicity we restrict to the case $|A(v)\cap A(w)|\leq 1$. The general case can be discussed very similarly.

We have the following characterization of reversible processes and their invariant measures.
\begin{theorem}\label{finiramai}
Consider a diffusion process on a metric graph $\mathcal{G}$ having generator $\mathcal A$ as in Definition \ref{def}. Then the process is reversible if and only if the Markov chain on $V$  with rates \eqref{befana} is reversible. In this case the invariant measure of the process is given by
\begin{equation}\label{lainva}
\mu=\left\{
\begin{array}{ll}
\mu_\mathbf e(x)=\frac{2c\pi_vq(v,w)}{\sigma^2_\mathbf e(x)}e^{\int_0^xs_\mathbf e(y)dy -\int_x^{\ell_{\mathbf e}}s_\mathbf e(y)dy}\,, & x\in \mathbf e\in A(v)\cap A(w)\,,\\
\mu_v=c\pi_v\alpha_v\,, & v\in V\,,
\end{array}
\right.
\end{equation}
where $\left(\pi_v\right)_{v\in V}$ is the unique invariant measure of the Markov chain with rates \eqref{befana} and $c$ is a suitable normalization constant.
\end{theorem}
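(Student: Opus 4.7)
The plan is to pivot on the reversibility criterion of Lemma~\ref{dot}: reversibility of the diffusion is equivalent to having condition $(3)$ together with the vanishing of every edge current, $J_\mathbf{e}[\mu]=0$. Each relation $J_\mathbf{e}[\mu]=0$ is a first order linear ODE for $\sigma^2_\mathbf{e}\mu_\mathbf{e}$, so it determines $\mu_\mathbf{e}$ on the interior of $\mathbf{e}$ up to a single multiplicative constant; matching these constants with the boundary values prescribed by $(3)$ at the two endpoints of $\mathbf{e}$ will force a compatibility relation that turns out to be precisely detailed balance for the auxiliary chain with rates \eqref{befana}.

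Concretely, I would first integrate $J_\mathbf{e}[\mu]=0$ to obtain
\begin{equation*}
\sigma^2_\mathbf{e}(x)\mu_\mathbf{e}(x)=C_\mathbf{e}\exp\!\Bigl(2\int_0^x s_\mathbf{e}(y)\,dy\Bigr)
\end{equation*}
for a positive constant $C_\mathbf{e}$. Picking an edge $\mathbf{e}\in A^+(v)\cap A^-(w)$, so that coordinates $0$ and $\ell_\mathbf{e}$ correspond to $v$ and $w$ respectively, the upper line of $(3)$ evaluated at the two endpoints gives $C_\mathbf{e}=2\lambda_v\alpha_{v,\mathbf{e}}$ and $C_\mathbf{e}\,e^{2\int_0^{\ell_\mathbf{e}}s_\mathbf{e}}=2\lambda_w\alpha_{w,\mathbf{e}}$. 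Eliminating $C_\mathbf{e}$ and symmetrically redistributing the exponential yields
\begin{equation*}
\lambda_v\,\alpha_{v,\mathbf{e}}\,e^{\int_0^{\ell_\mathbf{e}}s_\mathbf{e}}=\lambda_w\,\alpha_{w,\mathbf{e}}\,e^{-\int_0^{\ell_\mathbf{e}}s_\mathbf{e}},
\end{equation*}
which by \eqref{befana} is exactly $\lambda_v q(v,w)=\lambda_w q(w,v)$.

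Since $\mathcal G$ is connected the auxiliary chain is irreducible, so the existence of a strictly positive family $(\lambda_v)_{v\in V}$ solving the above system on every edge is equivalent to reversibility of the chain, in which case the $\lambda_v$ are proportional to its unique invariant probability $\pi$. This gives both implications of the equivalence: if the diffusion is reversible, the $\lambda_v$ supplied by Lemma~\ref{dot} form a reversible measure for the chain; conversely, starting from a reversible $\pi$ I set $\lambda_v:=c\pi_v$, define $\mu_\mathbf{e}$ and $\mu_v$ by \eqref{lainva}, and check that all conditions of Lemma~\ref{dot} hold with $(1')$ in place of $(1)+(2)$. The identity $2\int_0^x s_\mathbf{e}=\int_0^x s_\mathbf{e}-\int_x^{\ell_\mathbf{e}}s_\mathbf{e}+\int_0^{\ell_\mathbf{e}}s_\mathbf{e}$ recasts the constant $C_\mathbf{e}=2c\pi_v\alpha_{v,\mathbf{e}}$ as the prefactor $2c\pi_v q(v,w)$ appearing in \eqref{lainva}; uniqueness of $\mu$ in Lemma~\ref{dot} then identifies it as the invariant measure, with the overall constant $c$ fixed by the normalization \eqref{norm}.

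The main obstacle I expect is purely bookkeeping: the exponent in \eqref{befana} carries opposite signs depending on whether $\mathbf{e}\in A^+(v)\cap A^-(w)$ or $\mathbf{e}\in A^-(v)\cap A^+(w)$, and the same asymmetry reappears when evaluating $\sigma^2_\mathbf{e}\mu_\mathbf{e}$ at the two endpoints of $\mathbf{e}$. One must verify that the compatibility relation derived for $\lambda_v$ is genuinely symmetric in the roles of $v$ and $w$ and is insensitive to the orientation chosen on $\mathbf{e}$. The hypothesis $|A(v)\cap A(w)|\le 1$ assumed in the statement rules out the inconvenient case of multiple parallel edges between the same pair of vertices, which would otherwise produce one compatibility condition per parallel edge that must be reconciled by hand.
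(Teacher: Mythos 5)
Your proposal is correct and follows essentially the same route as the paper: both directions hinge on the characterization in Lemma~\ref{dot}, integrating $J_\mathbf{e}[\mu]=0$ to get $\sigma^2_\mathbf{e}\mu_\mathbf{e}$ up to a constant $C_\mathbf{e}$ per edge, and matching condition $(3)$ at the two endpoints to produce exactly the detailed balance relation $\lambda_v q(v,w)=\lambda_w q(w,v)$ for the auxiliary chain. The only differences are cosmetic (you absorb the factor $e^{\int_0^{\ell_\mathbf{e}}s_\mathbf{e}}$ into $C_\mathbf{e}$ and integrate the ODE directly where the paper cites its earlier interval formula), so no further comparison is needed.
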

\begin{proof}
Suppose that the Markov chain with rates \eqref{befana} is reversible with invariant measure $\pi$. Then by the detailed balance relationship
\begin{equation}\label{detbal}
\pi_vq(v,w)=\pi_wq(w,v)
\end{equation}
the measure $\mu$ is well defined.
We have to show that the measure \eqref{lainva} satisfies the conditions $(1')$ and $(3)$ in Lemma \ref{dot}. Condition $(1')$ is satisfied since on each edge the measure is given by the upper formula in \eqref{lainva} that coincides, by \eqref{facile2} \eqref{facile3}, with the simple case \eqref{facile} that corresponds to zero probability current.

If we compute the quantities appearing in condition $(3)$, that is formula \eqref{arrivanoirussi},
we obtain
\begin{equation}
\left\{
\begin{array}{ll}
\frac{1}{2}\sigma^2_\mathbf e(v)\mu_{\mathbf e}(v)=c\pi_v\alpha_{v,\mathbf e} & v\in V, \mathbf e\in A(v)\,,\\
\mu_v=c\pi_v\alpha_v & v\in V\,,
\end{array}
\right.
\end{equation}
so that condition $(3)$ is satisfied with $\lambda_v=c\pi_v$.

Conversely suppose that the diffusion process on a metric graph having generator $\mathcal A$ as in Definition \ref{def} is reversible. Then, since the probability current across each edge must be zero, then the density of the invariant measure $\mu$ on each edge has to be of the form
\begin{equation}
\mu_\mathbf e(x)=\frac{C_\mathbf e}{\sigma^2_\mathbf e(x)}e^{\int_0^xs_\mathbf e(y)dy -\int_x^{\ell_{\mathbf e}}s_\mathbf e(y)dy}\,,
\end{equation}
for some suitable constants $C_\mathbf e$.
Condition $(3)$ for the stationarity implies that there exists some positive numbers $\left(\lambda_v\right)_{v\in V}$ for which we have
\begin{equation}\label{cess}
\left\{
\begin{array}{ll}
C_\mathbf e=\lambda_v\alpha_{v,\mathbf e}e^{\int_0^{\ell_\mathbf e}s_\mathbf e(y)dy}\,, & \textrm{if}\ \mathbf e\in A^{+}(v)\\
C_\mathbf e=\lambda_v\alpha_{v,\mathbf e}e^{-\int_0^{\ell_\mathbf e}s_\mathbf e(y)dy}\,, & \textrm{if}\ \mathbf e\in A^{-}(v)\,.
\end{array}
\right.
\end{equation}
Take $\mathbf e\in A(v)\cap A(w)$ and compute $C_\mathbf e$ using \eqref{cess}
using the two different formulas at $v$ and $w$. We get
$$
C_\mathbf e=\lambda_vq(v,w)=\lambda_wq(w,v)\,.
$$
Normalizing to one the $\lambda$'s we have that the second identity above coincides with the detailed balance for the Markov chain with rates $q$.
\end{proof}


\begin{thebibliography}{99}
	
	 \bibitem{AT}V. Anantharam, P. Tsoucas \emph{A proof of the Markov chain tree theorem} Statist. Probab. Lett. {\bf 8} (1989), no. 2, 189–192.
	
	
	
	\bibitem{ACCG} D. Andreucci, E. N. M. Cirillo, M. Colangeli, D. Gabrielli \emph{ Fick and Fokker--Planck diffusion law in inhomogeneous media }
	 J. Stat. Phys. {\bf 174} (2019), no. 2, 469–493.
	
\bibitem{BGL}	D. Bakry,  I. Gentil,  M. Ledoux \emph{Analysis and geometry of Markov diffusion operators} Grundlehren der Mathematischen Wissenschaften {\bf 348} Springer, Cham, (2014).
	
\bibitem{BPY}  M. Barlow,  J. Pitman,  M. Yor \emph{On Walsh's Brownian motions} S\'{e}minaire de Probabilit\'{e}s, XXIII, 275–293, Lecture Notes in Math., {\bf 1372}, Springer, Berlin, (1989).
	
\bibitem{QG} G. Berkolaiko, R. Carlson, P. Kuckment and S.A. Fulling \emph{Quantum Graphs And Their Applications: Proceedings of an AMS-IMS-SIAM  Joint Summer Research Conference on Quantum Graph and Their Application, June 19-23, 2005, Snowbird, Utah } American Mathematical Soc. (2006), Vol. 415

\bibitem{Boga} V. I. Bogachev, N. V. Krylov, and M. R{\"o}ckner \emph{Elliptic and parabolic equations for measures} Russian Math. Surveys (2009), Vol 64, No. 6, 973–1078


\bibitem{BBI}  D. Burago,  Y. Burago,  S. Ivanov \emph{A course in metric geometry} Graduate Studies in Mathematics, {\bf 33} American Mathematical Society, (2001).	

\bibitem{CM} F. Camilli, C. Marchi \emph{A comparison among various notions of viscosity solution for Hamilton-Jacobi equations on networks} J. Math. Anal. Appl. {\bf 407} (2013), no. 1, 112–118.
	
\bibitem{FG} A. Faggionato, D. Gabrielli \emph{A representation formula for large deviations rate functionals
		of invariant measures on the one dimensional torus} Annales de l'Institut Henri Poincare (B) Probability and Statistics , {\bf 48}, No 1, 212-234 (2012)

\bibitem{FS} M.I. Freidlin, S.J. Sheu
\emph{Diffusion processes on graphs: stochastic differential equations, large deviation principle}
Probab. Theory Related Fields {\bf 116} (2000), no. 2, 181–220.

\bibitem{FWb} M.I. Freidlin, A.D. Wentzell \emph{Random perturbations of dynamical systems} Third edition. Grundlehren der Mathematischen Wissenschaften  {\bf 260} Springer, Heidelberg, (2012).

\bibitem{FW1} M.I. Freidlin,  A.D. Wentzell \emph{Diffusion processes on graphs and the averaging
principle} Ann. Probab. {\bf 21}, 2215–2245, (1993)


\bibitem{FW2} M.I. Freidlin,  A.D. Wentzell \emph{Random perturbations of Hamiltonian systems} Memoirs
AMS No {\bf 523} (1994)

\bibitem{GP}  M. Garavello,  B. Piccoli \emph{Traffic flow on networks. Conservation laws models} AIMS Series on Applied Mathematics, {\bf 1} American Institute of Mathematical Sciences (AIMS), Springfield, MO, (2006).


\bibitem{KPS} V. Kostrykin, J. Potthoff, R. Schrader
\emph{Brownian motions on Metric Graphs} J. of Mathematical Physics No {\bf 53(9)} (2011)

\bibitem{M} D. Mugnolo
\emph{Semigroup methods for evolution equations on networks}
Understanding Complex Systems. Springer, Cham, (2014).

\bibitem{PW} J. Pitman ,  W. Tang  \emph{Tree formulas, mean first passage times and constant of a Markov chain} Bernoulli (2018), Vol. 24, No. 3, 1942-1972


\bibitem{Risken} H. Risken \emph{The Fokker-Planck
Equation} Springer (1996)


\bibitem{W} J. Walsh  \emph{A diffusion with a discontinuous local time} In:
Temps Locaux, Ast\'{e}risque (1978) No. 52-53, 37-45.


\end{thebibliography}
\end{document}